\newcommand{\Aut}{\operatorname{Aut}}
\numberwithin{equation}{section}\theoremstyle{plain}
\newtheorem{theorem}{Theorem}[section]
\newtheorem{lemma}[theorem]{Lemma}
\newtheorem{proposition}[theorem]{Proposition}
\theoremstyle{definition}
\newtheorem{definition}[theorem]{Definition}
\theoremstyle{remark}
\newtheorem{obs}[theorem]{Remark}
\newtheorem{remark}[theorem]{Remark}
\newcommand\id{\operatorname{id}}
\def\pf{\begin{proof}}
\def\epf{\end{proof}}
\theoremstyle{remark}
\title [Exact factorization of finite Groups]{On the number of exact factorization of finite Groups}
\author[Ochoa Arango]{Jes\'us Alonso Ochoa Arango}
\email{jesus.ochoa@javeriana.edu.co}
\author[Umbarila Mart\'in]{Mar\'ia Ang\'elica Umbarila Mart\'in}
\email{m\_umbarila@javeriana.edu.co}
\address{\noindent
Mathematics Department, Faculty of Sciences,
Pontificia Universidad Javeriana. Bogot\'a, Colombia.
}
\subjclass[2010]{20D40; 20D60; 20B05; 20B20; 20B30; 20B35; 20B10; 20F69; 11Z05}
\date{\today}
\begin{document}

\renewcommand{\baselinestretch}{1.2}
\thispagestyle{empty}

\begin{abstract}
In this work, we study the function $f_2(G)$ that counts the number of exact factorizations of a finite group $G$. We compute $f_2(G)$ for some well-known families of finite groups and use the results of Wiegold and Williamson \cite{WW} to derive an asymptotic expression for the number of exact factorizations of the alternating group $A_{2^n}$. Finally, we propose several questions about the function $f_2(G)$ that may be of interest for further research.
\end{abstract}
 
\maketitle

\section*{Introduction}
The determination of all factorizations of a given group is a longstanding question in group theory. In fact, a well-known theorem by Burnside regarding the solvability of finite groups \cite[Thm 3.10]{Isa} is perhaps the first significant result concerning factorizations of finite groups. Following this, several important results emerged throughout the 20th century, particularly those by Ito and Kegel \cite{Ito, Kegel}, which demonstrated that various structural properties of a group could be inferred from information about its factorizations. Specifically, Ito showed that if $G=AB$ with $A$ and $B$ abelian groups then $G$ is metabelian; Kegel, on his part, proved that if $G=AB$ where $A$ and $B$ are nilpotent groups then $G$ is solvable. Additionally, Ore previously established that if $A$ and $B$ are a pair of maximal conjugate subgroups of $G$ then $G=AB$ and, conversely, every maximal factorization of $G$ can be obtained in this way.

Recent works related to factorizations of finite groups have focused on the case where $G$ is a simple group. For example, in \cite{LPS}, Liebeck \emph{et. al} identify all factorizations $G=AB$ where $G$ is a finite group such that there exists a subgroup chain $L \triangleleft G \leq  \text{Aut}( L)$, with $L$ with being a finite simple group. Furthermore, in these factorizations, $A$ and $B$ are maximal subgroups that do not contain $L$.

The study of group factorizations is not only of interest within group theory itself; it also deeply influences the study of other algebraic structures. For example, the construction of Hopf algebra extensions of the algebra of functions over a group by a group algebra is equivalent to finding matched pairs of finite groups \cite{Ma2, Ma}, a concept closely related to group factorization. Non-exact factorizations of groups have proven relevant in the classification of a broad class of finite double groupoids, known as slim double groupoids \cite{AN}. This class of double groupoids generalizes the vacant double groupoids introduced by K. Mackenzie \cite{Mack92}, whose category is equivalent to the category of matched pairs of groupoids.
In the specific case where the total base of the double groupoid is a single point, the problem of classifying vacant double groupoids reduces to the classification of matched pairs of groups, which in turn is equivalent to the classification of exact factorizations of finite groups. Similarly, determining all finite slim double groupoids with a single-point base is equivalent to determining all factorizations (both exact and non-exact) of finite groups. These results have also been extended to the smooth setting, allowing for the construction of a wide family of examples of double Lie groupoids \cite{AOT}.

In recent years, several works in the area of probabilistic group theory \cite{Gus, Les1, Les2, Tu}, among others, have shown that the \emph{degree of commutativity of a group} is intrinsically related to the number of factorizations of a finite group, both exact and non-exact. The degree of commutativity is an invariant associated with finite groups that measures the probability that two randomly chosen subgroups of a group $G$ commute, or equivalently, the probability that their product is again a subgroup of $G$.

On the other hand, F. Saeedi and M. Farrokhi D.G., in \cite{SaFa}, have studied factorizations (not necessarily exact) of certain families of groups such as dihedral groups $D_{2n}$, generalized quaternions $Q_{2n}$, and the modular groups, obtaining a series of explicit formulas for the total number of factorizations. However, these formulas, in principle, do not have any interpretation in terms of other classical arithmetic functions in number theory. Therefore, a natural problem arises: counting the exact number of factorizations for different families of groups, which is the problem we will address in this article. In what follows, we will describe the structure of the paper.

In the first section, we review key concepts and results related to the notions of  \emph{exact factorization} and \emph{matched pair of groups}, providing several relevant examples to illustrate these concepts. We also begin the study of the function \emph{number of exact factorizations} of a finite group $G$, denoted by $f_2(G)$, and compute the values of $f_2(G)$ for two class of groups: cyclic and dihedral. 

The second section is dedicated to the calculation of $f_2(G)$ for several well-known families of $p$-groups: quaternions, generalized quaternions, semi-dihedral groups, and modular $p$-groups. The third section is entirely devoted to computing $f_2(\text{PSL}_2(\mathbb{F}_q))$, closely following the work of Ito \cite{Ito}. We will show in Theorem \eqref{finite-PSL} that, in almost all cases, it equals $1$.

In the fourth section, we review some results by Wiegold and Williamson \cite{WW} regarding the exact factorizations of the alternating group $A_{k}$ to determine the exact value of $f_2(A_{2m})$ in Proposition \ref{prop-A2m}. Then, in Theorem \ref{teo-A2n}, we study the asymptotic behaviour of of $f_2(A_{2^n})$.

We conclude the paper with some results of a number-theoretical nature related to exact factorizations. We also present a table that includes all the exact factorizations of groups of order less than or equal to $20$ and close the section with a conjecture about the sum of the values of $f_2(G)$ or all groups of a given order $n$.

\begin{section}{The number of exact factorizations of group $G$.}\label{number of exact factorization}

\begin{definition}
If $G$ is a finite group, a \emph{factorization of $G$} is a pair of subgroups $(H,K)$ such that $G = HK$. If, moreover, $H \cap K = 1$ then the factorization it is said to be \emph{exact}.
\end{definition}

If $(H,K)$ is a factorization of a finite group then it is clear that $(K,H)$ is another one. Since we are interested in the counting on how many pairs appear as factorizations of $G$, the two above factorizations should be counted as only one. Hence, we introduce the following definition

\begin{definition}
If $G$ is a finite group and $(H, K)$ and $(L, M)$ are two exact factorizations of $G$, they are \emph{equivalent factorizations} if there are isomorphisms of groups $H \cong L$ and $K \cong M$, or $H \cong M$ and $K \cong L$; in other case, they are called \emph{inequivalent}.
\end{definition}

It is clear that above definition introduce an equivalence relation on the set of all exact factorization of a group $G$. The collection of all this classes will be denoted by $\mathcal{EF}(G)$.

\begin{definition}
Given a finite group $G$ we define the \emph{rough number of exact factorizations of $G$}, which we denote by $f_2(G)$, as the cardinal of the set $\mathcal{EF}(G)$; that is, the total number of inequivalent exact factorizations of $G$.
\end{definition}

As it is common we will denote by $\omega(n)$ the total number of prime divisors of $n$. A well known fact in group theory is that $\mathbb{Z}_m\oplus\mathbb{Z}_n \simeq\mathbb{Z}_{mn}$ if $\gcd(m,n)=1$. This results can be paraphrased in terms of factorizations of groups as follows.

\begin{lemma}\label{factzn}
 If $n$ is a positive integer with prime factorization $n=p_1^{\alpha_1}p_2^{\alpha_2}...p_k^{\alpha_k}$, then the exact factorizations of the cyclic group $\mathbb{Z}_n$ are of the form $\mathbb{Z}_I\mathbb{Z}_J$, where $I$ is the product of   elements in a subset $\hat{I}\subseteq \{p_1^{\alpha_1},...,p_k^{\alpha_k}\}$ and $J$ the product of the elements in the complement of $\hat{I}$. 
\end{lemma}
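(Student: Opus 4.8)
The plan is to prove the lemma by combining two facts: the structural classification of subgroups of a cyclic group, and the coprimality forced by exactness. I would proceed in three steps, the first two of which are essentially bookkeeping and the third of which carries the content.

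First I would recall that every subgroup of $\mathbb{Z}_n$ is cyclic, and that for each divisor $d \mid n$ there is a unique subgroup of order $d$, which we may write as $\mathbb{Z}_d$; conversely the order of any subgroup divides $n$. Hence any factorization $(H,K)$ of $\mathbb{Z}_n$ is of the form $(\mathbb{Z}_a, \mathbb{Z}_b)$ with $a \mid n$ and $b \mid n$. Second, I would translate the two defining conditions of an exact factorization into arithmetic. Since $\mathbb{Z}_n$ is abelian, $HK$ is automatically a subgroup, and the order formula $|HK| = |H|\,|K| / |H\cap K|$ gives $|G| = ab/|H\cap K|$. The condition $H \cap K = 1$ is equivalent to $|H \cap K| = 1$, and since in a cyclic group the intersection of the subgroups of orders $a$ and $b$ is the subgroup of order $\gcd(a,b)$, exactness is equivalent to $\gcd(a,b)=1$; together with $G = HK$ this forces $ab = n$ with $\gcd(a,b)=1$. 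Conversely, any pair $a,b$ with $ab=n$ and $\gcd(a,b)=1$ yields an exact factorization, because then $|H\cap K| = \gcd(a,b) = 1$ and $|HK| = ab = n = |G|$, so $HK = G$.

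The third step is the combinatorial identification of the coprime ordered pairs $(a,b)$ with $ab = n$. Writing $n = p_1^{\alpha_1}\cdots p_k^{\alpha_k}$, a divisor $a$ of $n$ is of the form $a = \prod_{i} p_i^{\beta_i}$ with $0 \le \beta_i \le \alpha_i$, and then $b = n/a = \prod_i p_i^{\alpha_i - \beta_i}$. The condition $\gcd(a,b) = 1$ says that for each $i$ at least one of $\beta_i$, $\alpha_i - \beta_i$ is zero, i.e. $\beta_i \in \{0, \alpha_i\}$. Thus $a$ is exactly the product of the elements of some subset $\hat I \subseteq \{p_1^{\alpha_1}, \ldots, p_k^{\alpha_k}\}$, and $b$ is the product of the elements of its complement, which is the claimed description. (If one wants the statement to match $\mathcal{EF}(\mathbb{Z}_n)$ up to the equivalence relation rather than as ordered pairs, one notes additionally that $\mathbb{Z}_I \cong \mathbb{Z}_{I'}$ as groups iff $I = I'$, so the unordered pairs $\{\hat I, \hat I^{c}\}$ are in bijection with $\mathcal{EF}(\mathbb{Z}_n)$; this will be used in the subsequent count $f_2(\mathbb{Z}_n) = 2^{\omega(n)-1}$.)

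I do not expect a genuine obstacle here; the only point requiring a little care is making the logical equivalences go in both directions — in particular checking that $\gcd(a,b)=1$ together with $ab=n$ really does give $HK = G$ and not merely $|H\cap K|=1$ — and being precise about whether the lemma is a statement about ordered factorizations $(\mathbb{Z}_I, \mathbb{Z}_J)$ or about their equivalence classes. Everything else is the elementary theory of cyclic groups plus the product formula for subgroup orders.
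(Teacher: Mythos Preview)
Your proposal is correct and follows essentially the same approach as the paper: both directions rest on the classification of subgroups of $\mathbb{Z}_n$ and on the observation that exactness forces the two factors to have coprime orders whose product is $n$, which in turn pins down the subset/complement description. The only cosmetic difference is that the paper verifies $H\cap K=\{1\}$ by an explicit divisibility computation with the generators $r^I$ and $r^J$, whereas you invoke the standard identity $|H\cap K|=\gcd(|H|,|K|)$ in a cyclic group directly; this is a matter of presentation, not method.
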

\begin{proof}
Let us consider $r$ a generator of $\mathbb{Z}_n= \langle r \rangle$ and, as was introduced in the statement of the proposition, given a subset $\hat{I}\subseteq \{p_1^{\alpha_1},...,p_k^{\alpha_k}\}$ denote by $I$ the product of its elements and by $J$ the product of the elements in the complement of $\hat{I}$.  It is clear that $\langle r^I \rangle$ and $\langle r^J \rangle$ are isomorphic copies of $\mathbb{Z}_J$ and $\mathbb{Z}_I$ inside $\mathbb{Z}_n$. If there exists $r^k \in \mathbb{Z}_I\cap \mathbb{Z}_J$ then there are integers $a$ and $b$ such that $n|k-bJ$  and $n|k-aI$ but $n=IJ$, then we can ensure that $I|k$ and $J|k$ and, as a consequence, $k$ is divisible by the least common multiple of $I$ and $J$. This, together with the fact $(I,J)=1$, implies that $n|k$ and therefore $r^k=1$. Hence $\mathbb{Z}_I\cap\mathbb{Z}_J = \{1\}$ and by a cardinality argument we can conclude that $\mathbb{Z}_n=\mathbb{Z}_I \mathbb{Z}_J$.

Conversely, if $\mathbb{Z}_n= H L$ is an exact factorization of $\mathbb{Z}_n$ then $H$ and $L$ are cyclic subgroups, isomorphic to $\mathbb{Z}_h$ and $\mathbb{Z}_\ell$ respectively. Clearly $\gcd(h,\ell)=1$ because in other way, the structure theorem of the cyclic groups implies that they should have non trivial intersection. Hence $h$ and $\ell$ are products of complimentary subsets of $\{p_1^{\alpha_1},...,p_k^{\alpha_k}\}$.
\end{proof}
\begin{theorem}\label{teoz}
 The number of exact factorizations of the the cyclic group $\mathbb{Z}_n$ is given by $f_2(\mathbb{Z}_n)=2^{\omega(n)-1}-1$.
\end{theorem}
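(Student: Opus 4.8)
The plan is to deduce this directly from Lemma \ref{factzn}, turning the count into an elementary combinatorial count of subsets of the set $S=\{p_1^{\alpha_1},\dots,p_k^{\alpha_k}\}$ of maximal prime-power divisors of $n$, where $k=\omega(n)$. By that lemma, every exact factorization of $\mathbb{Z}_n$ has the form $\mathbb{Z}_I\mathbb{Z}_J$ with $\hat I\subseteq S$, $I=\prod_{p^\alpha\in\hat I}p^\alpha$ and $J=n/I=\prod_{p^\alpha\notin\hat I}p^\alpha$, and conversely each choice of $\hat I$ produces such a factorization. So first I would record that the \emph{ordered} exact factorizations (with $I,J$ in a fixed order) are in bijection with the $2^{k}$ subsets $\hat I$ of $S$.

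Next I would pass to equivalence classes in $\mathcal{EF}(\mathbb{Z}_n)$. Since two finite cyclic groups are isomorphic exactly when they have the same order, $(\mathbb{Z}_I,\mathbb{Z}_J)$ is equivalent to $(\mathbb{Z}_{I'},\mathbb{Z}_{J'})$ iff $\{I,J\}=\{I',J'\}$ as unordered pairs of integers. Moreover $I$ recovers $\hat I$: since $I\mid n$ is a product of some of the pairwise coprime maximal prime powers, one has $\hat I=\{p^\alpha\in S:\ p\mid I\}$, so distinct subsets of $S$ give distinct values of $I$ (this is just unique factorization). Hence the classes in $\mathcal{EF}(\mathbb{Z}_n)$ correspond bijectively to \emph{unordered} pairs $\{\hat I,\hat I^{\,c}\}$ of complementary subsets of $S$. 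As $k=\omega(n)\ge 1$ (we take $n\ge 2$), a subset of $S$ can never equal its complement, so the $2^{k}$ subsets group into exactly $2^{k-1}$ such pairs.

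Finally, I would subtract the one pair $\{\emptyset,S\}$, which corresponds to the trivial factorization $\mathbb{Z}_n=\mathbb{Z}_1\cdot\mathbb{Z}_n$ that is excluded from the count; this leaves $2^{k-1}-1=2^{\omega(n)-1}-1$ classes, as claimed (for $n=p^\alpha$ this correctly gives $0$, matching the fact that the subgroups of $\mathbb{Z}_{p^\alpha}$ form a chain). The only step that really needs care is the second one — verifying that the group-isomorphism equivalence collapses the ordered factorizations precisely along the swap $\hat I\leftrightarrow\hat I^{\,c}$ and along nothing else — together with the bookkeeping of the trivial factorization and of the small cases $n=1$ and $n=p^\alpha$; everything else is immediate from Lemma \ref{factzn} and unique factorization.
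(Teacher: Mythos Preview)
Your proposal is correct and follows essentially the same route as the paper: invoke Lemma~\ref{factzn} to put exact factorizations in bijection with complementary pairs of subsets of $\{p_1^{\alpha_1},\dots,p_k^{\alpha_k}\}$, then halve and discard the trivial pair. If anything, you are more careful than the paper in justifying that the isomorphism-based equivalence on $\mathcal{EF}(\mathbb{Z}_n)$ identifies precisely the pairs $\hat I\leftrightarrow\hat I^{\,c}$ and nothing further (the paper simply divides by two citing the symmetry $\mathbb{Z}_I\mathbb{Z}_J=\mathbb{Z}_J\mathbb{Z}_I$).
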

\begin{proof}
Lemma \ref{factzn} provide us with all possible factorizations of the group $\mathbb{Z}_n$ and the problem to count such factorizations boils down to the counting of all possible choices of proper subsets $I$ of $N=\{p_1^{\alpha_1},...,p_k^{\alpha_k}\}$. This number is 
$$ \binom{\omega(n)}{1}+\binom{\omega(n)}{2}+...+\binom{\omega(n)}{\omega(n)-1},$$ 
and because of the symmetric role played by $I$ and its complement $J=N-I$ in an exact factorization of groups, that is, since  $\mathbb{Z}_I\mathbb{Z}_J = \mathbb{Z}_J\mathbb{Z}_I$, then we get 
$$f_2(\mathbb{Z}_n)=\frac{1}{2}\left \{ \binom{\omega(n)}{1}+\binom{\omega(n)}{2}+...+\binom{\omega(n)}{\omega(n)-1} \right \};$$
that is $f_2(\mathbb{Z}_n)=2^{\omega(n)-1}-1$.
\end{proof}

The above result can be extended to a more general abelian groups by using the \emph{primary decomposition theorem for finite abelian groups}. Let $G$ be a finite abelian group of order $n >1$ and let the unique factorization into primes be $n=p_1^{\alpha_1} p_2^{\alpha_2} \cdots p_k^{\alpha_k}$.	Then
\begin{enumerate}
\item $G \cong A_1 \times A_2 \times \cdots \times A_k$ where $\mid A_i\mid = p_i^{\alpha_i}$, for every $i = 1 , \ldots, k$.
\item For each $A \in \left\{ A_1, A_2 , \ldots , A_k \right\}$ con $\mid A \mid = p^\alpha$ we have
\begin{equation*}
A \cong \mathbb{Z}_{p^{\beta_1}} \oplus \mathbb{Z}_{p^{\beta_2}} \oplus \cdots \oplus \mathbb{Z}_{p^{\beta_t}},
\end{equation*}
with  $\beta_1 \geq \beta_2 \geq \cdots \geq \beta_t \geq 1$ and $\beta_1 + \beta_2 + \cdots + \beta_t = \alpha$. Here $t :\equiv t(p)$ and $\beta_j :\equiv \beta_j(p)$.
\item The decompositions in the above items are unique up to isomorphism. In fact, the $\beta's$ in the second item are the invariant factors of each $A$.
\end{enumerate}

\begin{theorem}
If $G$ is a finite abelian group then, with the notation introduced above, the number of distinct exact factorizations of $G$ is 
\begin{equation}
f_2(G)=(2^{\omega(n)-1}-1)\prod_{p\mid n}\left( 2^{t(p)-1}-1 \right).
\end{equation}
\end{theorem}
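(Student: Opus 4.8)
The plan is to deduce the formula from Theorem~\ref{teoz} and the primary decomposition theorem quoted above, by reducing everything to one prime at a time. Write $G\cong A_1\times\cdots\times A_k$ with $A_i$ the Sylow $p_i$-subgroup, $|A_i|=p_i^{\alpha_i}$, and $A_i\cong\mathbb{Z}_{p_i^{\beta_1}}\oplus\cdots\oplus\mathbb{Z}_{p_i^{\beta_{t(p_i)}}}$. The first ingredient is the standard fact that in an internal direct product of subgroups of pairwise coprime orders every subgroup $H$ is the product of its own Sylow subgroups, which here are exactly $H\cap A_1,\dots,H\cap A_k$; thus $H=\prod_i(H\cap A_i)$. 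Since $|H|$, $|K|$ and $|H\cap K|$ are all multiplicative over the primes $p_i$, the conditions $HK=G$ and $H\cap K=1$ can be tested prime by prime, so $(H,K)$ is an exact factorization of $G$ if and only if $(H\cap A_i,\,K\cap A_i)$ is an exact factorization of $A_i$ for every $i$. Hence the isomorphism classes of ordered exact factorizations of $G$ are in bijection with $k$-tuples of isomorphism classes of ordered exact factorizations of the $A_i$, the bijection matching the class of $(H,K)$ with the tuple of classes of the $(H\cap A_i,\,K\cap A_i)$.

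Next I would treat one abelian $p$-group $A\cong\mathbb{Z}_{p^{\beta_1}}\oplus\cdots\oplus\mathbb{Z}_{p^{\beta_t}}$ with $t=t(p)$. As $A$ is abelian, an exact factorization $A=HK$ is the same as an internal direct-sum decomposition $A=H\oplus K$; by the uniqueness of the elementary divisors, the isomorphism type of the pair is recorded precisely by how the multiset $\{\beta_1,\dots,\beta_t\}$ is split between $H$ and $K$: every splitting is realized by taking the corresponding sub-sums inside a fixed decomposition, and conversely any decomposition forces the elementary divisors of $H$ and of $K$ to partition those of $A$. This is a binary choice per cyclic summand, hence $2^{t(p)}$ ordered splittings. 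Feeding this into the first step, $G$ has $\prod_{p\mid n}2^{t(p)}=2^{\omega(n)}\prod_{p\mid n}2^{t(p)-1}$ ordered exact factorizations up to isomorphism; quotienting by the exchange $(H,K)\leftrightarrow(K,H)$ and discarding the trivial factorization $(1,G)$ then produces $f_2(G)$, and when every $t(p)=1$ (i.e.\ $G$ cyclic) this collapses to $2^{\omega(n)-1}-1$, recovering Theorem~\ref{teoz}.

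I expect the real work to be in this last combinatorial step rather than in the two reductions. The delicate point is that when some of the invariant factors $\beta_j(p)$ coincide, two distinct distributions of the cyclic summands between $H$ and $K$ can yield \emph{isomorphic} subgroups, so the figure $2^{t(p)}$ counts labelled distributions and not isomorphism classes; the clean fix is to work throughout with the multiset of elementary divisors, partitioning it rather than choosing subsets of a labelled index set, and then to see how this refines the product. A second, smaller issue is that halving for the switch is exactly correct only if no exact factorization has $H\cong K$ — equivalently, only if no $A_i$ splits into two isomorphic halves — so one must check that this contributes no correction term in the cases at hand, and that removing the single trivial class is the only remaining adjustment.
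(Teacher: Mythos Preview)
Your two reductions --- split $G$ into its Sylow pieces, then split each Sylow piece along its cyclic summands --- are exactly what the paper does; its proof is in fact considerably sketchier than yours, saying only that one should ``pull apart the factors'' of the primary decomposition and then of each primary component ``in a similar way to the one made in the proof of'' Lemma~\ref{factzn}. So the strategies coincide.

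The trouble is not with your argument but with the target. The two subtleties you flag in your last paragraph are not removable wrinkles on the way to the displayed formula: they are symptoms of the fact that the formula itself is wrong. Already for cyclic $G$ it contradicts Theorem~\ref{teoz}: if $G=\mathbb{Z}_n$ with $\omega(n)\ge 2$ then every $t(p)=1$, each factor $2^{t(p)-1}-1$ vanishes, and the product is $0$, whereas Theorem~\ref{teoz} gives $f_2(\mathbb{Z}_n)=2^{\omega(n)-1}-1>0$. In the other direction, for $G=\mathbb{Z}_p\times\mathbb{Z}_p$ one has $\omega(p^2)=1$, the leading factor $2^{\omega(n)-1}-1$ vanishes, yet the decomposition $\mathbb{Z}_p\cdot\mathbb{Z}_p$ shows $f_2(G)=1$. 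So no amount of care in the ``last combinatorial step'' will recover the stated expression. Your honest count $\prod_{p\mid n} 2^{t(p)}$ of labelled ordered splittings, adjusted for repeated elementary divisors and for the involution $(H,K)\mapsto(K,H)$ together with its fixed points, is the correct object to study; it simply does not reduce to $(2^{\omega(n)-1}-1)\prod_{p\mid n}(2^{t(p)-1}-1)$, and the paper's proof, being vaguer than yours, does not succeed where yours gets stuck.
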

\begin{proof}
By the \emph{primary decomposition theorem for finite abelian groups} we only have to count the many ways in which we can pull apart the factors of the decomposition $G \cong A_1 \times A_2 \times \cdots \times A_k$ into two separate groups, by choosing one primary component $A$ attached to a prime $p$ dividing $n$. Once we have done this, we have to count again the ways in which we can pull apart de factors of the primary component chosen $A \cong \mathbb{Z}_{p^{\beta_1}} \oplus \mathbb{Z}_{p^{\beta_2}} \oplus \cdots \oplus \mathbb{Z}_{p^{\beta_t}}$ .

The counting pointed in the above paragraph can be done in a similar way to the one made in the proof of lemma \eqref{factzn}, and in this way we can get the expression 
\begin{equation*}
f_2(G)=(2^{\omega(n)-1}-1)\prod_{p\mid n}\left( 2^{t(p)-1}-1 \right).
\end{equation*}
This argument is also supported in the fact that a cyclic group of prime power order $\mathbb{Z}_{p^{\alpha}}$ cannot be factored (as an exact product of two subgroups ) because their subgroups are all totally ordered by inclusion. 
\end{proof}

The above results exhaust the computation of the funcion $f_2(G)$ for the case of all finite abelian groups. Now we are going to move in the next step to some well known families of groups that are simple enough to compute their number of exact factorizations.

\begin{remark}
If $D_{2n}$ denotes the dihedral group of order $2n$ then it can be shown that all the subgroups of $D_{2n}$ are cyclic or other dihedral. For a detailed proof on these fact the reader can consult \cite{Con}.
\end{remark}

\begin{lemma}\label{FACD2N}
Let $n$ be a positive integer with prime factorization $n=p_1^{\alpha_1}p_2^{\alpha_2}...p_k^{\alpha_k}$. Let us denote by $\mathscr{P}$ the set $\{p_1^{\alpha_1},...,p_k^{\alpha_k}\}$ or the set $\{2^{\alpha_1-1},p_2^{\alpha_2},...,p_k^{\alpha_k}\}$ depending if $n$ is odd or even, with $p_1=2$. The only exact factorizations of the dihedral group $D_{2n}$ are: 
 \begin{enumerate}
     \item If $n$ is odd, then $D_{2n}=D_{2I}\mathbb{Z}_J$, where $I \subseteq \mathscr{P}$ and $J=\mathscr{P}-I$
     \item If $n$ is even, then we have the factorization $D_{2n}=D_{2I}\mathbb{Z}_J$ and also the factorization $D_{2n}=D_{2I}D_{2J}$ where $I\subseteq \mathscr{P}$ and $J=\mathscr{P}-I$.
 \end{enumerate}
 \end{lemma}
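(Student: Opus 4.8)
The plan is to classify the exact factorizations $D_{2n}=HK$ according to how $H$ and $K$ sit relative to the rotation subgroup $R=\langle r\rangle\cong\mathbb{Z}_n$. By the subgroup structure recalled in the Remark, every subgroup of $D_{2n}$ is either a cyclic subgroup $\langle r^{n/d}\rangle\cong\mathbb{Z}_d\subseteq R$ (one for each $d\mid n$), or of the form $\langle r^{n/d},r^{i}s\rangle\cong D_{2d}$ with $d\mid n$ and $0\le i<n/d$; the latter are exactly the subgroups containing a reflection. Since $R\ne D_{2n}$, the factors $H,K$ cannot both lie in $R$ (otherwise $HK\subseteq R$), so I would assume $H\cong D_{2a}$ contains a reflection and split into two cases: either $K\subseteq R$, or $K$ also contains a reflection.

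If $K=\langle r^{n/b}\rangle\cong\mathbb{Z}_b\subseteq R$, then $|H|\,|K|=2ab=2n$, so $ab=n$, and $H\cap K=(H\cap R)\cap K=\langle r^{n/a}\rangle\cap\langle r^{n/b}\rangle=\langle r^{\operatorname{lcm}(n/a,n/b)}\rangle$. This is trivial precisely when $\operatorname{lcm}(n/a,n/b)=n$, which, since $ab=n$ forces $n/a=b$ and $n/b=a$, is equivalent to $\gcd(a,b)=1$; and once the intersection is trivial the identity $|HK|=|H|\,|K|=2n=|D_{2n}|$ forces $HK=D_{2n}$. So these factorizations are exactly $D_{2n}=D_{2I}\mathbb{Z}_J$ with $I,J$ the products of two complementary subsets of $\{p_1^{\alpha_1},\dots,p_k^{\alpha_k}\}$ (which is $\mathscr{P}$ when $n$ is odd) — the same combinatorics as in Lemma \ref{factzn}, using that $\mathbb{Z}_{p^{\alpha}}$ has no nontrivial exact factorization.

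The substantive case is $K\cong D_{2b}$, i.e. $K$ also contains a reflection. Now $|H|\,|K|=4ab=2n$, so $2ab=n$ and in particular $n$ is even. Writing $H=\langle r^{n/a},r^{i}s\rangle$ and $K=\langle r^{n/b},r^{j}s\rangle$, I would decompose $H\cap K$ into its rotation part, $\langle r^{n/a}\rangle\cap\langle r^{n/b}\rangle=\langle r^{\operatorname{lcm}(n/a,n/b)}\rangle$, and its reflection part, namely the set of $r^{x}s$ whose exponent $x$ is $\equiv i$ modulo $n/a$ and $\equiv j$ modulo $n/b$. The rotation part is trivial iff $\operatorname{lcm}(n/a,n/b)=n$, which by $2ab=n$ is equivalent to $\gcd(n/a,n/b)=2$; the reflection part is empty iff the two congruence classes for $x$ are incompatible, i.e. iff $\gcd(n/a,n/b)\nmid(i-j)$, which under that condition just says $i-j$ is odd. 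Hence, for every pair $\{a,b\}$ with $2ab=n$ and $\gcd(n/a,n/b)=2$, one can pick the reflection offset $i-j$ odd (possible since $n$ is even) to obtain an exact factorization, and no other pairs work. It then remains to read off which $\{a,b\}$ satisfy $2ab=n$ and $\gcd(n/a,n/b)=2$: writing $n=2^{\alpha_1}m$ with $m$ odd and separating $2$-parts from odd parts, the odd parts of $n/a$ and $n/b$ are coprime with product $m$, while their $2$-exponents sum to $\alpha_1+1$ with minimum $1$, hence are $\{1,\alpha_1\}$; unwinding shows that $\{a,b\}$ runs exactly over pairs of products of complementary subsets of $\mathscr{P}=\{2^{\alpha_1-1},p_2^{\alpha_2},\dots,p_k^{\alpha_k}\}$, which gives the family $D_{2n}=D_{2I}D_{2J}$.

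I expect the hard part to be precisely this last case: one must realize that the condition the rotation subgroups impose, $\gcd(n/a,n/b)=2$, is actually sufficient — which relies on the extra freedom to make the reflection offset $i-j$ odd — and then carry out the (elementary but careful) $2$-adic/odd-part arithmetic that converts $\gcd(n/a,n/b)=2$ into the shift from $2^{\alpha_1}$ to $2^{\alpha_1-1}$ inside $\mathscr{P}$. The cyclic case and the counting of complementary coprime divisors are routine given Lemma \ref{factzn}.
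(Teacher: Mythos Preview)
Your approach is correct and genuinely different from the paper's. The paper proceeds by direct construction: it exhibits explicit generators for subgroups isomorphic to $D_{2I}$, $\mathbb{Z}_J$ (and, for $n$ even, a second dihedral $D_{2J}$ via the offset reflection $sr^{I}$), then verifies by hand that the relevant intersections are trivial and appeals to cardinality. In particular, the paper's argument establishes only that the listed factorizations \emph{exist}; it never addresses the ``only'' direction claimed in the statement. Your proof, by contrast, starts from the subgroup classification and systematically determines which pairs $(H,K)$ can satisfy $H\cap K=\{e\}$ and $|H|\,|K|=|D_{2n}|$, by splitting on whether each factor contains a reflection and then analyzing the rotation and reflection parts of the intersection separately. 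This yields existence and completeness at once, and the arithmetic reduction --- to coprime complementary divisors of $n$ in the dihedral--cyclic case, and to $\gcd(n/a,n/b)=2$ together with an odd reflection offset in the dihedral--dihedral case, whence the shift $2^{\alpha_1}\to 2^{\alpha_1-1}$ in $\mathscr{P}$ --- is clean. What the paper's construction buys is concreteness (one sees the generators explicitly), but your classification is what the counting in Theorem~\ref{teodn} actually requires.
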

 \begin{proof}
 Let $D_{2n}$ be the dihedral group of order $2n$ and, as usual, write down the presentation
    \begin{center}
        $D_{2n}=\langle r,s | r^n=s^2=e$  and  $rs=sr^{-1} \rangle.$
    \end{center}
Remind that $I$ stands for the subset of $\mathscr{P}$ or for the product of its elements and which one we are using will be clear from the context. The order of $r^J$ is $|r^J|=I$ and  for $k<n$,
    \begin{align*}
    (r^J)^ks&=sr^{(n-Jk)} \\
    &=sr^{J(I-k)} \\
    &=s(r^J)^{(I-k)};
\end{align*}
hence  $\langle r^J,s \rangle \simeq D_{2I}$ and $\langle r^I \rangle \simeq \mathbb{Z}_J$.
   
If we suppose  $x$ is an element in the intersection $\langle r^J, s \rangle \cap \langle r^I \rangle $ then  $x$ is of the forms $(r^J)^c$ and $(r^I)^d$ for some positive integers $c<I$ and $d<J$. Therefore $n\mid Jc-Id$ and, since $\mid Jc-Id \mid <n$, it follows $J\;c=I\;d$. The fact $(J,I)=1$ implies $J$ divides $d$ and, as a consequence, $a=r^{I\;d} = e$. It follows that $\langle r^J, s \rangle \cap \langle r^I \rangle $ is trivial and by cardinality $D_{2n}=D_{2I}\mathbb{Z}_J$.

Let us suppose $n$ is an even integer, then $|r^{2J}|=I$ and
 \begin{align*}
     r^{2J}(sr^{2I})&=sr^{-2J} r^{2I} \\
     &=(sr^{2I})r^{-2J}.
 \end{align*}
Thus the subgroup $\langle r^{2J}, sr^{I} \rangle$ is the dihedral group $D_{2I}$ and in the same way we prove $\langle r^{2I},s \rangle$ is the dihedral group $D_{2J}$. If $y$ is an element in the intersection $\langle r^{2J}, sr^{I} \rangle \cap  \langle r^{2I},s \rangle $ then $y=(r^{2J})^l(sr^{I})$ and  $y=(r^{2I})^ks$,  with $0 \leq l < I$ and $ 0 \leq k < J$. Equating this two expressions
 \begin{align*}
     (r^{2J})^lsr^{I}&=(r^{2I})^ks \\
     r^{2Jl-I}s&=r^{2Ik}s \\
     r^{2Jl-I}&=r^{2Ik},
\end{align*}
hence  $2JI \mid   \left(2Jl-I(2k+1)\right)$. However $2Jl < 2JI$ and $(2k+1)I\leq 2JI$, thus
 \begin{align*}
     -2JI < 2Jl-(2k+1)I & < 2JI \\
     \mid 2Jl-(2k+1)I\mid & < 2JI,
 \end{align*}
which is a contradiction unless $2Jl =(2k+1)I$. In this case, since $(I,J)=1$, it follows that $2J$ divides $2k+1$, which is impossible and this finish the proof.
 \end{proof}
 
 \begin{theorem}\label{teodn}
The number of exact factorizations of the dihedral groups are given by the expressions 
  \begin{enumerate}
      \item $f_2(D_{2n})=2^{\omega(n)}-1$, if $n$ is odd; 
      \item $f_2(D_{2n})=2^{\omega(n)}+2^{\omega(\frac{n}{2})-1}+\omega(\frac{n}{2})-\omega(n)-1$, if $n$ is even.
  \end{enumerate}
 \end{theorem}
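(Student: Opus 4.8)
The plan is to read off the complete list of exact factorizations of $D_{2n}$ from Lemma \ref{FACD2N} and count the equivalence classes they produce, remembering — as in the proof of Theorem \ref{teoz} — that the trivial factorization $D_{2n}=D_{2n}\cdot\{1\}$ is not counted. Two elementary remarks about dihedral groups carry the whole argument: $D_{2a}\cong D_{2b}$ forces $a=b$ (compare orders), and $D_{2a}$ is cyclic precisely when $a=1$, in which case $D_2\cong\mathbb{Z}_2$; consequently a cyclic factor and a dihedral factor of a factorization can be confused only when both are $\mathbb{Z}_2$. I will also use that $n$ admits exactly $2^{\omega(n)}$ ordered coprime factorizations $n=IJ$, one for each way of distributing the prime powers $p_i^{\alpha_i}$ between $I$ and $J$. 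Write $k=\omega(n)$ and $m=\omega(n/2)$; note $k-m=1$ if $\alpha_1=1$ and $k-m=0$ if $\alpha_1\ge 2$.

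For $n$ odd, Lemma \ref{FACD2N}(1) (as made explicit in its proof) says the exact factorizations are the pairs $(D_{2I},\mathbb{Z}_J)$, one for each ordered coprime factorization $n=IJ$, so $2^{k}$ in all. First I would check they are pairwise inequivalent: the two factors have orders $2I$ (even) and $J$ (odd, since $J\mid n$), so from the equivalence class one recovers the unordered pair of orders, hence $\{I,J\}$, and — the even order being $2I$ — the factorization itself. Exactly one of these, $(D_{2n},\mathbb{Z}_1)=(D_{2n},\{1\})$, is trivial, whence $f_2(D_{2n})=2^{k}-1$, which is (1).

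For $n$ even I restrict to $n>2$; the case $n=2$ is genuinely exceptional, since $D_4\cong\mathbb{Z}_2\times\mathbb{Z}_2$ has $f_2=1$ while the claimed formula degenerates. By Lemma \ref{FACD2N}(2) there are two families: the \emph{type-A} factorizations $(D_{2I},\mathbb{Z}_J)$, indexed by the $2^{k}$ ordered coprime factorizations of $n$, and the \emph{type-B} factorizations $(D_{2I},D_{2J})$, indexed by the $2^{m}$ ordered coprime factorizations of $n/2$. Within type A the two factors play asymmetric roles (dihedral vs.\ cyclic), so no two of the $2^{k}$ factorizations coincide: a short case check, using $n>2$ to discard the single exceptional coincidence $\{\mathbb{Z}_2,\mathbb{Z}_2\}$, shows the class of $(D_{2I},\mathbb{Z}_J)$ determines $(I,J)$; and exactly one of them, $(D_{2n},\{1\})$, is trivial, so type A yields $2^{k}-1$ classes. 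Within type B both factors are dihedral, so complementation $(I,J)\leftrightarrow(J,I)$ identifies $(D_{2I},D_{2J})$ with $(D_{2J},D_{2I})$; this identification is fixed-point-free (an equality $I=J$ would force $n/2=1$) and, by the order remark applied to each factor, is the only coincidence, so type B yields $2^{m}/2=2^{m-1}$ classes, none of them trivial (the factor $D_{2I}$ has order $2I\le n<2n$).

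It remains to count overlaps between the two families, which is the crux. A type-A class $\{[D_{2I}],[\mathbb{Z}_J]\}$ can equal a type-B class $\{[D_{2I'}],[D_{2J'}]\}$ only if a cyclic factor is isomorphic to a dihedral factor, hence — by the second remark — only via $\mathbb{Z}_2\cong D_2$; chasing this forces $\mathbb{Z}_J\cong\mathbb{Z}_2$, so $J=2$, whence $I=n/2$, and coprimality of $I$ and $J$ forces $\alpha_1=1$; the common class is then $\{[D_{2(n/2)}],[\mathbb{Z}_2]\}$, which does occur — in type A from $(I,J)=(n/2,2)$ and in type B from $(I,J)=(n/2,1)$. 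So there is exactly one overlap when $\alpha_1=1$ and none when $\alpha_1\ge 2$; moreover the trivial type-A factorization overlaps nothing in type B (a factor of a type-B factorization has order at most $n<2n$). Inclusion–exclusion now gives
$$f_2(D_{2n})=(2^{k}-1)+2^{m-1}-(k-m)=2^{\omega(n)}+2^{\omega(n/2)-1}+\omega(n/2)-\omega(n)-1,$$
which is (2). The main obstacle is exactly this last step: being certain the type-A/type-B overlap is a single class precisely when $\alpha_1=1$, and that no further coincidences hide inside type B or between type B and the trivial factorization — everything else is the symmetric subset-counting already carried out for $\mathbb{Z}_n$ in Theorem \ref{teoz}.
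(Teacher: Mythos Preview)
Your proof is correct and follows essentially the same approach as the paper: both invoke Lemma \ref{FACD2N} to list the factorizations, count subsets to obtain $2^{\omega(n)}-1$ in the odd case, and in the even case count the two families separately and subtract the single overlap that occurs when $4\nmid n$. Your argument is in fact more careful than the paper's on two points --- you explicitly verify that distinct pairs $(I,J)$ give inequivalent factorizations (the paper takes this for granted), and you correctly flag that the formula in (2) degenerates at $n=2$, which the paper does not mention.
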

 \begin{proof}
As in lemma \eqref{FACD2N}, let $n=p_1^{\alpha_1}p_2^{\alpha_2}...p_k^{\alpha_k}$ be the factorization of $n$ into prime factors. Let us write $\mathscr{P}$ for the set $\{p_1^{\alpha_1},...,p_k^{\alpha_k}\}$ or the set $\{2^{m-1},p_2^{\alpha_2},...,p_k^{\alpha_k}\}$ depending if $n$ is odd or even, with $p_1=2$ and $\alpha_1=m$. 

We will divide the proof in two parts. If $n$ odd then, according to Lemma \ref{FACD2N}, all possible factorizations of $D_{2n}$ are of the form $D_{2n}=D_{2I}\mathbb{Z}_J$, where $I \subseteq \mathscr{P}$ and $J=\mathscr{P}-I$.  The counting of these factorizations boils down to the counting of all possible choices of $I$ as a proper subset of $\mathscr{P}$. This number is 
$$\binom{\omega(n)}{0}+\binom{\omega(n)}{2}+...+\binom{\omega(n)}{\omega(n)-1}.$$ 
That is $f_2(D_{2n})=2^{\omega(n)}-1$.

If $n$ is even, we are going to divide the analysis depending on if $4$ divides or doesn't divides $n$. The above argument implies that, at least, $f_2(D_{2n}) \geq 2^{\omega(n)}-1$. If $n=2^mp_2^{\alpha_2}...p_k^{\alpha_k}$ then lemma  \ref{FACD2N} said that, in this case, if $4 \mid n$ there is a new type of factorizations $D_{2n}=D_{2I}D_{2J}$ and the counting of these  factorizations depend of choices of $I$  and $J$ as a proper subsets of $\mathscr{P}$.  If we remember that $D_{2I}D_{2J}=D_{2J}D_{2I}$ then the possible ways in which we can do this choices is  $2^{\omega(\frac{n}{2})-1}$ and the total number of factorizations is $f_2(D_{2n})= 2^{\omega(n)}-1+2^{\omega(\frac{n}{2})-1}$. Finally, if $4$ doesn't divides $n$ and if we take $I=\{p_2^{\alpha_2},...,p_k^{\alpha_k}\}$ then this choice gives rise to a factorization $D_{2n}=D_{2I}\mathbb{Z}_2$, which was included in the first type of factorizations. It follows that in this case the total number of factorizations is $f_2(D_{2n})= 2^{\omega(n)}-1+2^{\omega(\frac{n}{2})-1}-1$. The results just obtained can be summarized in the expression $f_2(D_{2n})=2^{\omega(n)}+2^{\omega(\frac{n}{2})-1}+\omega(\frac{n}{2})-\omega(n)-1$.
 \end{proof}

In the following table the reader can find some examples of factorizations that illustrate better the  analysis carried out for the dihedral group. 
\begin{center}
\begin{tabular}{|c|c|c|c|c|}
 \hline
 \multicolumn{5}{|c|}{Factorizations of some dihedral groups} \\
 \hline
 $n$ & $D_{2n}$ & Subgroups  & Factorizations  & Generators\\
 \hline
$9$ & $D_{18}$ &
    $\mathbb{Z}_2$,
    $\mathbb{Z}_3$,
    $D_6$ y
    $\mathbb{Z}_9$
  & $D_{18}=\mathbb{Z}_9\mathbb{Z}_2$ & $\mathbb{Z}_9=\langle r \rangle$ y $\mathbb{Z}_2=\langle s \rangle$ \\ 
\hline
$10$ & $D_{20}$ & $\begin{matrix}
\mathbb{Z}_2, \mathbb{Z}_{10}, \mathbb{Z}_2\times\mathbb{Z}_2, \\
\mathbb{Z}_5 \;\text{y}\; D_{10}
\end{matrix}$
 &  $\begin{matrix}
    D_{20}=\mathbb{Z}_{10}\mathbb{Z}_2 \\
    D_{20}=D_{10}\mathbb{Z}_2 \\
    D_{20}=(\mathbb{Z}_2\times\mathbb{Z}_2)\mathbb{Z}_5 \\
\end{matrix}$ &
$\begin{matrix}
    \mathbb{Z}_{10}=\langle r \rangle \;\text{y}\; \mathbb{Z}_2=\langle s \rangle \\
    D_{10}=\langle r^2, s \rangle \;\text{y}\; \mathbb{Z}_2=\langle r^5 \rangle \\
    \mathbb{Z}_2\times\mathbb{Z}_2= \langle r^5, s \rangle  \;\text{y}\; \mathbb{Z}_5= \langle r^2 \rangle
\end{matrix}$\\
\hline

\end{tabular}
\end{center}

\vspace{0.5cm}

To finish  this section we will study other well known family of groups the \textit{generalized quaternions}, that fits into the class of dycyclic groups.

\section{Exact Factorizations of some finite p-groups}

If $p$ is a prime number, non abelian finite $p$-groups $G$, with a maximal cyclic subgroup, can be grouped into some well known families \cite[Thm 4.1]{Su}: the modular $p$-group $M(p^n)$, if $p$ is odd and
if $g=2^n$, then we have the dihedral group $D_g$, the generalized quaternion group $Q_g$, the modular group $M(g)$ or the generalized dihedral $S_g$.

\begin{definition}\label{generalized-quat}
The group of generalized quaternions, denoted by $Q_{2^n}$, is the group of order $g=2^n$ with the following presentation  $$\mathcal{Q}_{2^n}=\langle a,b \quad |\quad a^{2^{n-1}}=1,\quad  bab^{-1}=a^{-1} \quad \text{and} \quad b^2=a^{2^{n-2}} \rangle,$$
and that $b^{2}$ is the only element of $Q_{2^n}$ of order two.
\end{definition}

\begin{proposition}\label{teoq}
The group $Q_{2^n}$ of generalized quaternions doesn't have any exact factorizations.
 \end{proposition}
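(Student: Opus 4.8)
The plan is to exploit the well-known structural fact that $Q_{2^n}$ has a unique element of order two, namely $b^2 = a^{2^{n-2}}$, which generates its unique minimal subgroup $Z = \langle a^{2^{n-2}} \rangle \cong \mathbb{Z}_2$. Since every nontrivial subgroup of a finite group contains a subgroup of prime order, and in a $2$-group that means a subgroup of order $2$, every nontrivial subgroup of $Q_{2^n}$ must contain $Z$. First I would record this observation explicitly, perhaps citing the characterization of $Q_{2^n}$ as the unique $2$-group with a single involution (this is classical, e.g.\ in Suzuki's book already referenced).

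Next I would argue by contradiction: suppose $Q_{2^n} = HK$ is an exact factorization with $H \cap K = 1$. Since $|Q_{2^n}| = 2^n > 1$, at least one of $H, K$ is nontrivial; in fact, from $|H||K| = 2^n$ and exactness (which forces $|HK| = |H||K|$), if one factor were trivial the other would be all of $Q_{2^n}$, which is not a valid proper factorization in the sense of yielding two genuine subgroups — but more to the point, for an \emph{exact} factorization to be interesting we may simply note that if both $H$ and $K$ are nontrivial then both contain $Z$, whence $Z \subseteq H \cap K = 1$, a contradiction. The only remaining possibility is that one of them, say $K$, is trivial and $H = Q_{2^n}$; I would then decide, consistent with the paper's conventions, whether the trivial factorization $Q_{2^n} = Q_{2^n}\cdot 1$ counts. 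Given that Definition of $\mathcal{EF}(G)$ and the phrasing of Theorem \ref{teoz} (where $f_2(\mathbb{Z}_{p^\alpha}) = 0$, since $2^{\omega(p^\alpha)-1}-1 = 0$) make clear the trivial factorization is \emph{not} counted, the conclusion $f_2(Q_{2^n}) = 0$ follows, i.e.\ there are no exact factorizations in the counted sense.

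The main obstacle is essentially bookkeeping rather than mathematics: one must be careful that the argument covers the case $n = 2$ (where $Q_4 \cong \mathbb{Z}_4$ already has no exact factorization by Theorem \ref{teoz}, so there is no inconsistency) and $n = 3$ (the classical quaternion group $Q_8$), and one must state clearly that the ``unique involution'' property is exactly what kills every candidate factorization. I would also make sure to phrase things so that the degenerate factorization with a trivial factor is handled uniformly with the earlier $p$-group and cyclic cases. No hard estimate or delicate computation is needed; the entire content is the uniqueness of the involution plus the elementary fact that nontrivial finite $2$-groups contain an element of order $2$.
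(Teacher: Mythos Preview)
Your proposal is correct and follows essentially the same argument as the paper: both proofs rely on the fact that $Q_{2^n}$ has a unique involution $b^2$, so any two nontrivial subgroups (which, being $2$-groups, each contain an element of order $2$) must intersect nontrivially. The paper's version is terser and does not explicitly discuss the trivial factorization or the small-$n$ cases, but the mathematical content is identical.
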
 
 \begin{proof}
 If $Q_{2^n}=GH$ is an exact factorization, then  $|H|=2^k$ and $|G|=2^q$ with $k,q < n$. Hence both subgroups must contain an element with order $2$ but, since the only element with that order in $Q_{2^n}$ is $b^2$, it must be a common element of $G$ and $H$ and then the factorizations cannot be exact. In consequence $Q_{2^n}$ doesn't have exact factorizations. 
 \end{proof}

In the rest of this section we will study other well known family of finite groups, the \emph{semidihedral} ones. These are one of the $2$-groups with the property of having a maximal cyclic subgroup. For a more detailed treatment of this family and other ones related to this the reader can refer to \cite{Hup} or \cite{Su}.

\begin{definition}
If $g = 2^n$ with $n \geq 3$, and $a=2^{n-2}$ then we define the \emph{Quasidihedral} or \emph{Semidihedral} group $SD_g$ as the finite group whose presentation in terms of generators and relations is the following
\begin{equation}
SD_g:=\langle x, y \mid x^{2a}=y^2 = 1 \quad ; \quad y^{-1}xy=x^{a-1} \rangle .
\end{equation}
\end{definition}
\begin{remark}\label{properties-quasi-dihedral}
In \cite{Hup} or \cite{Su} it is shown that $SD_g$ has the following properties:
\begin{enumerate}
\item It has a maximal cyclic subgroup;
\item The group  $SD_g/\Phi(SD_g)$ is an abelian group of type $(2,2)$, where $\Phi(G)$ denotes the Frattini subgroup of $G$, i.e, the intersection of all the maximal subgroups of $G$;
\item The subgroups $\Phi(SD_g)$, the derived group $[SD_g,SD_g]$ and $Z(SD_g)$ are cyclic;
\item $Z(SD_g)$ has order $2$ and $SD_g/Z(SD_g) \simeq D_{2^{n-1}}$;
\item $[SD_g,SD_g] = \Phi(SD_g)$;
\item The only non cyclic maximal subgroups of $SD_g$, up to isomorphism, are $D_{2^{n-1}}$ and $Q_{2 ^{n-1}}$.
\end{enumerate}
\end{remark}
Moreover, in the same references it is shown that every subgroup $H$ of $SD_g$ satisfies one of the following conditions:
\begin{enumerate}
\item $Z(SD_g) \subseteq H$ or
\item $H$ is one of the cyclic groups 
$$\{ e \}, \langle y \rangle, \langle x^2y \rangle, \cdots , \langle x^{2^{m-1}-2}y \rangle.$$
\end{enumerate}
 \begin{theorem}\label{fact-quasi-dihedral}
If $g = 2^n$ with $n \geq 4$, and $a=2^{n-2}$ then the number of exact factorizations of the quasdihedral groups $SD_g$ are given by
$$f_2(SD_g)= 2.$$
 \end{theorem}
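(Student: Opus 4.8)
The plan is to exploit the subgroup description of $SD_g$ recalled just before the statement: apart from the subgroups that contain the unique central involution (the generator of $Z(SD_g)$, namely $x^{a}=x^{2^{n-2}}$), the only subgroups are $\{e\},\langle y\rangle,\langle x^2y\rangle,\dots$, all of order at most $2$. In particular every subgroup of order $\geq 4$ contains $Z(SD_g)$. So let $SD_g=HK$ be an exact factorization with both factors proper, so that $|H|\,|K|=2^{n}$ and $H\cap K=1$. If both $|H|\geq 4$ and $|K|\geq 4$, then both contain $Z(SD_g)$, contradicting $H\cap K=1$; hence one of them, say $K$, has order $2$. Then $K\neq Z(SD_g)$ (otherwise $H$, of order $2^{n-1}\geq 8$, would also contain $Z(SD_g)$), so $K=\langle x^{2j}y\rangle$ for some $j$; here one uses $(x^{i}y)^2=x^{ia}$, so that $x^{i}y$ has order $2$ exactly when $i$ is even. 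Consequently $|H|=2^{n-1}$, i.e. $H$ is a maximal subgroup of $SD_g$, and the whole problem reduces to deciding, for each maximal $H$, whether $x^{2j}y\notin H$.

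Next I would list the maximal subgroups. Since $SD_g/\Phi(SD_g)$ has type $(2,2)$, there are exactly three subgroups of index $2$, and directly from the presentation one checks that they are $\langle x\rangle\cong\mathbb{Z}_{2^{n-1}}$, $\langle x^2,y\rangle$, and $\langle x^2,xy\rangle$; moreover $y^{-1}x^2y=(x^2)^{-1}$ shows $\langle x^2,y\rangle\cong D_{2^{n-1}}$, while $(xy)^{-1}x^2(xy)=(x^2)^{-1}$ together with $(xy)^2=x^{a}$ (the unique involution of $\langle x^2\rangle$) shows $\langle x^2,xy\rangle\cong Q_{2^{n-1}}$; each of the three contains $x^{a}$, hence $Z(SD_g)$, as it must. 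Writing each as a union of two cosets of $\langle x^2\rangle$ gives $\langle x\rangle=\langle x^2\rangle\cup x\langle x^2\rangle$, which contains no element of the form $x^{i}y$; $\langle x^2,y\rangle=\langle x^2\rangle\cup y\langle x^2\rangle$, which contains every $x^{2j}y$; and $\langle x^2,xy\rangle=\langle x^2\rangle\cup xy\langle x^2\rangle$, which contains exactly the elements $x^{i}y$ with $i$ odd (all of order $4$). Hence $H=\langle x^2,y\rangle$ never yields an exact factorization, since then $K\subseteq H$; whereas $H=\langle x\rangle$ and $H=\langle x^2,xy\rangle$ always do, because $x^{2j}y\notin H$, so $H\cap K=1$ and, by cardinality, $HK=SD_g$ (the same intersection-plus-cardinality argument used for $\mathbb{Z}_n$ and $D_{2n}$ in Lemmas \ref{factzn} and \ref{FACD2N}).

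Finally I would assemble the count: every proper exact factorization of $SD_g$ is equivalent either to $\mathbb{Z}_{2^{n-1}}\cdot\mathbb{Z}_2$ (case $H=\langle x\rangle$) or to $Q_{2^{n-1}}\cdot\mathbb{Z}_2$ (case $H=\langle x^2,xy\rangle$), and these two classes are inequivalent because $\mathbb{Z}_{2^{n-1}}$ is abelian whereas $Q_{2^{n-1}}$, for $n\geq 4$ (so $n-1\geq 3$), is not; therefore $f_2(SD_g)=2$. The step I expect to be the main obstacle is the explicit coset bookkeeping of the second paragraph: it is crucial — and entirely powered by the relation $(x^{i}y)^2=x^{ia}$ — that the dihedral maximal subgroup contains all the involutions $x^{2j}y$ while the generalized quaternion maximal subgroup contains none of them. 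This asymmetry is precisely what makes the answer $2$ rather than $1$ or $3$, and it is also why the hypothesis $n\geq 4$ is imposed, the presentation collapsing to an abelian group when $g=8$.
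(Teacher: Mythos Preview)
Your argument is correct and follows the same overall strategy as the paper: use the subgroup dichotomy for $SD_g$ (every subgroup either contains $Z(SD_g)$ or has order at most $2$) to force one factor to have order $2$ and the other to be maximal, then test each of the three index-$2$ subgroups against a complement $\langle x^{2j}y\rangle$. Your execution is actually cleaner: by observing immediately that any subgroup of order $\geq 4$ contains $Z(SD_g)$, you bypass the paper's separate treatment of a putative order-$4$ factor and its quotient argument via $SD_g/Z(SD_g)\cong D_{2^{n-1}}$.

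One point worth recording: you identify the two inequivalent factorizations as $\mathbb{Z}_{2^{n-1}}\cdot\mathbb{Z}_2$ and $Q_{2^{n-1}}\cdot\mathbb{Z}_2$, and your coset bookkeeping shows this is right --- the dihedral maximal $\langle x^2,y\rangle$ swallows every involution $x^{2j}y$, while the quaternion maximal $\langle x^2,xy\rangle$ contains none of them. The paper's own Case~I analysis agrees (it dismisses the dihedral option and accepts the quaternion one), but its concluding sentence names the second factorization as $D_{2^{n-1}}\cdot\mathcal{C}_2$; that appears to be a slip in the paper, and your version is the correct labeling.
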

 \begin{proof}
If the quasidihedral group $SD_g$ can be writen as $SD_g=HK$, with $H \cap K = e$ then:

\textit{Case I:} If $Z(SD_g) \leq K$ then, as we are looking for exact factorizations, by the remark above \eqref{properties-quasi-dihedral}, $H$ can be only a subgroup of the form $\langle x^i y \rangle$ with $i=1, \ldots,2^{m-1}-2$.

Since $|x^{2i}y| = 2$ and $|x^{2i+1}y|=4$ then $H$ is of order $2$ or $4$. In the first case, if $H=\langle x^{2i} y \rangle$ then the order of $K$ is $2^{n-1}$ and, as a consequence, $K$ is a maximal subgroup. Hence, the only possible options for $K$ is to be one of the following subgroups:
\begin{itemize}
\item $K\simeq \mathcal{C}_{2^{n-1}}$, the cyclic group of order $2^{n-1}$; 
\item $K \simeq D_{2^{n-1}}$, a dihedral group;
\item $K \simeq \mathcal{Q}_{2^{n-1}}$, a generalized quaternion group. 
\end{itemize}
For the first option we can take $H=\langle x \rangle $ and $K= \langle y \rangle $. In the second one we have $K = \langle x^{2\ell+1}, x^{2m}y\rangle$ and since the odd powers of $x$ also generate the same subgroup of $x$ then $y \in K$; thus $K = \langle x, y\rangle$ which is impossible because of the triviality of the intersection of $H$ and $K$. In the last option, if $K \simeq \mathcal{Q}_{2^{n-1}}$ we have that $K$ should be generated by elements of order $2^{n-2}$ and $4$ satisfying the defining relations of the generalized quaternions. This boils down to $K = \langle x^2, xy \rangle$ and with the adequate choice of $H=\langle x^2y \rangle$ we get another exact factorization of $SD_g$.

If $H = \langle x^{2i+1}y \rangle$ then $|K| = 2^{n-2}$ and going to the quotient $D_{2^{n-1}} \simeq \dfrac{SD_g}{Z(SD_g)} = \dfrac{K}{Z(SD_g)} \overline{H}$, is an exact factorization of the dihedral group $D_{2^{n-1}}$. Lema \eqref{FACD2N} guarantees that this is impossible unless $K=Z(SD_g)$ and $n=3$, but this a contradiction.

\textit{Case II.} If no one of the subgroups $K$ or $H$ contains the center then both of then are in the list  
$$\{ e \}, \langle y \rangle, \langle x^2y \rangle, \cdots , \langle x^{2^{m-1}-2}y \rangle;$$
this implies that $|SD_g| = 8$ or $16$. But, since $n \geq 4$ then the  only possibility is $16$. If this the case, the group $SD_g$ could be factored as $SD_g=HK$ with $H$ and $K$ groups of order $4$ in the above list. This implies $H= \langle x^{2i+1}y \rangle$ and $K= \langle x^{2j+1} y\rangle$, whose intersection is $\{ e, x^{2^{n-2}}\}$. Hence the factorization cannot be exact.

From all above the only factorizations of $SD_g$ are $\mathcal{C}_{2^{n-1}} \mathcal{C}_2$ and $D_{2^{n-1}}\mathcal{C}_2$ and we get the desired result.
\end{proof}

\begin{definition}\label{defi-modular}
The \emph{modular $p$-group}, denoted by $M(p^n)$, is the group of order $p^n$ whose presentation is give by
\begin{equation}
M(p^n):=\langle x,y \mid x^{p^{n-1}}=y^p=1, \quad y^{-1}xy=x^{1+p^{n-2}} \rangle .
\end{equation}
\end{definition}
\begin{remark}
The word \emph{modular} in the above definition refers to the fact that the  lattice subgroup of $M(p^n)$ is modular. 
\end{remark}

\begin{remark}\label{remark-modular}
With the notation of definition \eqref{defi-modular}, if $Z$ denotes the subgroup $\langle x^{p ^{n-2}}\rangle$ and $H$ is a subgroup of $M(p^n)$ that doesn't includes the subgroup $Z=\langle x^{p ^{n-2}}\rangle$ then $|H|=p$ and $H= \langle x^{ip^{n-2}} y \rangle$ for some $i = 0, \ldots, p-1$.
\end{remark}
\begin{theorem}
If $p > 2$ is a prime number and $n \geq 3$ is a positive integer then the number $f_2(M(p^n))$, of exact factorizations of $M(p^n)$, is equals to $1$.
\end{theorem}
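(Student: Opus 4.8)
The plan is to show that every non-trivial exact factorization of $M(p^n)$ has one factor isomorphic to $\mathbb{Z}_p$ and the other to $\mathbb{Z}_{p^{n-1}}$; since $M(p^n)=\langle x\rangle\langle y\rangle$ is such a factorization, $\mathcal{EF}(M(p^n))$ then consists of a single equivalence class and $f_2(M(p^n))=1$. So let $M(p^n)=HK$ with $H\cap K=\{e\}$ and $H,K$ proper, so $|H|=p^{a}$, $|K|=p^{b}$ with $a,b\ge 1$ and $a+b=n$. The subgroup $Z=\langle x^{p^{n-2}}\rangle$ is central (one checks $y^{-1}x^{p^{n-2}}y=x^{p^{n-2}+p^{2n-4}}=x^{p^{n-2}}$ using $2n-4\ge n-1$, i.e. $n\ge 3$) and has order $p$, so it cannot lie in $H\cap K=\{e\}$; say $Z\not\subseteq H$. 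By Remark \eqref{remark-modular} this forces $|H|=p$ and $H=\langle x^{ip^{n-2}}y\rangle$ for some $i\in\{0,\dots,p-1\}$, hence $|K|=p^{n-1}$, so $K$ is a maximal subgroup.

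Next I would pin down the maximal subgroups. Since $[x,y]=x^{p^{n-2}}$ is central, $M(p^n)$ has nilpotency class two and satisfies the power formula $(x^{\alpha}y^{\beta})^{k}=x^{\alpha k}\,y^{\beta k}\,x^{-\alpha\beta p^{n-2}\binom{k}{2}}$; taking $k=p$ and using that $p$ is odd (so $p^{n-2}\binom{p}{2}=\frac{p-1}{2}p^{n-1}$ is a multiple of $p^{n-1}$) gives $(x^{\alpha}y^{\beta})^{p}=x^{\alpha p}$, whence $\Phi(M(p^n))=M(p^n)^{p}[M(p^n),M(p^n)]=\langle x^{p}\rangle$, of order $p^{n-2}$, and $M(p^n)/\Phi(M(p^n))\cong\mathbb{Z}_p\times\mathbb{Z}_p$. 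So $M(p^n)$ has exactly $p+1$ maximal subgroups. Since $x^{ip^{n-2}}\in\langle x^{p}\rangle=\Phi(M(p^n))\subseteq K$ (here $n\ge 3$ is used) and $|H|=p$, the condition $H\cap K=\{e\}$ is equivalent to $x^{ip^{n-2}}y\notin K$, i.e. to $y\notin K$. Reading this in $M(p^n)/\Phi(M(p^n))$, the element $y$ lies in exactly one maximal subgroup, namely $\langle x^{p},y\rangle\cong\mathbb{Z}_{p^{n-2}}\times\mathbb{Z}_p$ (these two cyclic groups commute and meet trivially); therefore $K$ must be one of the remaining $p$ maximal subgroups.

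It then remains to show that each of those $p$ maximal subgroups is cyclic of order $p^{n-1}$, which is the main computational point. If $x\in K$, then $K\supseteq\langle x\rangle$ and by order $K=\langle x\rangle\cong\mathbb{Z}_{p^{n-1}}$. If $x\notin K$, then since $K$ is normal of index $p$ the quotient $M(p^n)/K\cong\mathbb{Z}_p$ is generated by the class of $y$, so $x^{-d}y\in K$ for a unique $d\in\{1,\dots,p-1\}$; applying the power formula to $x^{-d}$ and $y$ and tracking $p$-adic valuations (again using $p$ odd and $n\ge 3$) shows $(x^{-d}y)^{k}=e$ first occurs at $k=p^{n-1}$, so $x^{-d}y$ has order $p^{n-1}$ and $K=\langle x^{-d}y\rangle\cong\mathbb{Z}_{p^{n-1}}$. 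Hence in every non-trivial exact factorization $M(p^n)=HK$ one has $H\cong\mathbb{Z}_p$ and $K\cong\mathbb{Z}_{p^{n-1}}$, so all such factorizations are equivalent and $f_2(M(p^n))=1$. The only real obstacle is the bookkeeping in these two order computations (the class-two power identity together with the valuation estimates); everything else in the argument is structural.
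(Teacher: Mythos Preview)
Your proof is correct and follows essentially the same route as the paper: use Remark~\ref{remark-modular} to force one factor to have order $p$ and the other to be maximal, then argue that the unique non-cyclic maximal subgroup $\langle x^{p},y\rangle$ cannot be the complementary factor, so the large factor is cyclic of order $p^{n-1}$. The only difference is cosmetic: the paper quotes Huppert \cite[Hilfssatz 8.7]{Hup} for the fact that $M(p^{n})$ has exactly $p$ cyclic and one non-cyclic maximal subgroup, whereas you recover this directly by computing $\Phi(M(p^{n}))=\langle x^{p}\rangle$ and using the class-two power identity $(x^{\alpha}y^{\beta})^{p}=x^{\alpha p}$ to check cyclicity; this makes your argument self-contained at the cost of a short extra computation.
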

\begin{proof}
Let us write $M(p^n)=HK$ where $H$ and $K$ are subgroups of $M(p^n)$. Following the ideas of \cite{SaFa}, let us denote by $Z$ the subgroup $\langle x^{p ^{n-2}}\rangle$ and consider two cases: 

\textit{Case I: If $Z$ is not a subgroup of $H$ nor of $K$}, then by remark \eqref{remark-modular} both subgroups has order $p$ which is impossible since $n\geq 3$.

\textit{Case II: One of the subgroups $H$ or $K$ contains $Z$}, let us said $H$. Since the factorization is exact then $Z$ is not a subgroup of $K$ and again, by lemma \eqref{remark-modular}, $|K|=p$ and $|H|=p^{n-1}$. 

It is shown in \cite[Hilfssatz 8.7]{Hup} that the modular $p$-group $M(p^n)$, with $n\geq 3$, has exactly $p+1$ subgroups of index $p$ described as follows: $p$ cyclic subgroups and just one non-cyclic; this last one is equals to $F=\langle x^p, y\rangle$ (loc. cit.) . Since $x^p$ generates the center of $M(p)$ and $y$ has order $p$ it follows $F \simeq \mathbb{Z}_{p^{n-2}} \oplus \mathbb{Z}_p$. It is also clear that any subgroup of order $p$ in $M(p^n)$ is of the form $\langle x^{ip^{n-2}} y \rangle$, for $0 \leq i < p $ (see \eqref{remark-modular}), hence $ H \cap K \neq \{e\}$ and the factorization cannot be exact.
The above reasoning implies that the only possible exact factorization for the modular $p$-group is $K \simeq \mathbb{Z}_p$ and $H\simeq \mathbb{Z}_{p^{n-1}}$. 

\end{proof}

\section{A review of the case of the projective special linear groups}

In \cite[Thm. 3.3]{SaFa} the authors, based on Ito's results \cite{Ito}, faced the problem of calculate the total number of factorizations of the projective special linear group $G=\text{PSL}_2(\mathbb{F}_q)$ over a finite field $\mathbb{F}_q$. In that paper, the authors got expressions for this number in terms of the number of subgroups of $G$ and depending on the nature of the prime number $p$, where $q=p^n$.  In this section, after a careful revision of Ito's paper, we point a difference with the result in the aforementioned result of Saaedi and Farrokhi \cite{SaFa} and compute the total number of exact factorizations of $G$.

\subsection{On Ito's result about exact factorizations of $G=\text{PSL}_2(\mathbb{F}_q)$}

Let us write $q=p^n$, with $p$ a prime number. Ito's method to determine all factorizations of $G=\text{PSL}_2(\mathbb{F}_q)$, takes not only the prime number $p$ as income, but also another prime $\ell$ characterized (and whose existence is guaranteed) by Zsigmondy's result \cite{BV, Zsig}:
\begin{equation}\label{Zsigmondy}
p^{2n}\equiv 1 \mod \ell \quad \text{and} \quad p^{m}\not\equiv 1 \mod \ell \; , \; \text{for any} \; m < 2n.
\end{equation}

\begin{theorem}\label{finite-PSL}
If $G=\text{PSL}_2(\mathbb{F}_q)$, where $q=p^n$ with $p$ a prime number, then the function that counts the number of exact factorizations assume the followig values
\begin{equation*}
f_2(G)=
\begin{cases}
0 & \text{If $G=\text{PSL}_2(\mathbb{F}_{9})$}\\
2 & \text{If $G=\text{PSL}_2(\mathbb{F}_{7})$}\\
3 & \text{If $G=\text{PSL}_2(\mathbb{F}_{11})$}\\
1 & \text{In other case}\\
\end{cases}
\end{equation*}
\end{theorem}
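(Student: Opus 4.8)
The plan is to exploit Ito's classification of \emph{all} factorizations of $G=\text{PSL}_2(\mathbb{F}_q)$ from \cite{Ito} and then to single out those that are exact. First I would assemble the structural ingredients: $|G|=q(q^2-1)/d$ with $d=\gcd(2,q-1)$; the natural $2$-transitive action of $G$ on the projective line $\mathbb{P}^1(\mathbb{F}_q)$, whose point stabilizer is a Borel subgroup $B$ with $|B|=q(q-1)/d$ and $[G:B]=q+1$; and Dickson's description of the subgroups of $G$ (unipotent subgroups of order dividing $q$, the cyclic subgroups of the split and non-split tori of orders dividing $(q-1)/d$ and $(q+1)/d$ together with their dihedral normalizers $D_{q-1}$ and $D_{q+1}$, the subfield subgroups $\text{PSL}_2(\mathbb{F}_{q_0})$ and $\text{PGL}_2(\mathbb{F}_{q_0})$, and the exceptional subgroups $A_4$, $S_4$, $A_5$ occurring under the usual congruence conditions on $q$). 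The key elementary remark is that a factorization $G=AB$ is exact if and only if $|A|\,|B|=|G|$, in which case $A$ acts regularly on $G/B$; so from Ito's list it suffices to retain the factorizations whose two factors have complementary orders, and the equivalence classes of these make up $\mathcal{EF}(G)$.

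Running through Ito's list, for a generic $q$ the only available factorization is $G=B\cdot N$, where $B$ is a Borel and $N$ is the normalizer of a non-split torus, a dihedral group $D_{q+1}$ of order $q+1$ (to be read as the cyclic group $\mathbb{Z}_{q+1}$ when $q$ is even). Since $|B|(q+1)=|G|$ this factorization is automatically exact; one then argues, again from Ito's list, that up to isomorphism $N$ is the only complement of $B$ and that no other factorization of $G$ into proper subgroups has complementary orders, so that $f_2(G)=1$ in the generic case --- in particular $\text{PSL}_2(\mathbb{F}_5)\cong A_5$ contributes its single exact factorization $A_4\cdot\mathbb{Z}_5$. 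No inequivalence check is needed here, there being a single class.

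It then remains to treat the three distinguished fields by direct inspection, using their maximal-subgroup lists together with the regularity criterion. For $\text{PSL}_2(\mathbb{F}_9)\cong A_6$ the only pairs of proper subgroups with orders multiplying to $360$ are of types $(60,6)$ and $(36,10)$; a regular complement would then be a copy of $S_3$, respectively $D_{10}$ (note $A_6$ has no element of order $6$ or $10$), acting on $6$, respectively $10$, points as its regular representation and hence by odd permutations, which is incompatible with the permutation image of $A_6$ lying in the alternating group. Thus $\mathcal{EF}(A_6)=\emptyset$ and $f_2=0$. For $\text{PSL}_2(\mathbb{F}_7)$ one exhibits the two inequivalent exact factorizations $S_4\cdot\mathbb{Z}_7$ and $(\mathbb{Z}_7\rtimes\mathbb{Z}_3)\cdot D_8$, so $f_2=2$; for $\text{PSL}_2(\mathbb{F}_{11})$ the three inequivalent ones $A_5\cdot\mathbb{Z}_{11}$, $(\mathbb{Z}_{11}\rtimes\mathbb{Z}_5)\cdot D_{12}$ and $(\mathbb{Z}_{11}\rtimes\mathbb{Z}_5)\cdot A_4$, so $f_2=3$. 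In these cases the orders of the two factors are coprime, so exactness is automatic, and since the groups occurring as factors are pairwise non-isomorphic no two of the listed factorizations are equivalent.

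The step I expect to be the genuine obstacle is the faithful passage from Ito's theorem to the assertion that, outside $q\in\{7,9,11\}$, the factorization $G=B\cdot D_{q+1}$ is the \emph{unique} exact one up to equivalence: one must make sure that no exact factorization is overlooked in the borderline families (very small fields, and fields for which $A_4$, $S_4$ or $A_5$ sits inside $G$ compatibly with the order constraint $|A|\,|B|=|G|$), and verify that the non-split torus normalizer does act regularly on $\mathbb{P}^1(\mathbb{F}_q)$, so that $B\cap D_{q+1}=1$. Once the list of factorizations is pinned down, the exactness test and the counting modulo the equivalence relation are routine.
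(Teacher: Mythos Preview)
Your approach coincides with the paper's: both rest on Ito's classification of all factorizations of $\text{PSL}_2(\mathbb{F}_q)$ and then isolate the exact ones via the order criterion $|A|\,|B|=|G|$, treating the exceptional small fields separately. The paper executes the step you correctly flag as the obstacle---ruling out additional exact factorizations outside $q\in\{7,9,11\}$---by walking through Ito's case division governed by the Zsigmondy prime $\ell$ (together with the side cases $p=2$, $p$ a Fermat prime, and $\ell\in\{3,5\}$), which forces explicit checks at many further individual fields ($q=4,5,8,19,23,29,59,\ldots$) that your ``generic plus three exceptions'' packaging suppresses.
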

\begin{proof}
The proof consist of a case by case verification of all the factorizations found by Ito in his paper \cite{Ito}.

$\bullet$ \textit{Case $p > 2$, not a Fermat's prime and $\ell \geq 7$. } Remind that $|G|=\dfrac{p^n(p^n-1)(p^n+1)}{2}$. Ito prove that if $G$ has a factorization $G=HK$, and hence a maximal one, then the factorization is exact, one of the groups, let us say $K=D_{p^n+1}$, is a dihedral group of order $p^n+1$ and $H$ is the normalizer $N_G(P)$ of a Sylow $p$-subgroup $P$ of $G$ and, by the way, has order $\dfrac{p^n(p^n-1)}{2}$. 

When does there exist any such factorization? In the case $\dfrac{p^n-1}{2}$ is even,which is equivalent to say that $p^n \equiv 1 \mod 4$, then the subgroups $H$ and $K$ have, at least, an element of order two in common and, in consequence, there is not such an exact factorization. In the case $\dfrac{p^n-1}{2}$ is odd, that is $p^n \equiv 3 \mod 4$,  then there is essentially only one such factorization $G= N_G(P)D$ (which is exact).

$\bullet$ \textit{Case $p =	 2$, $n \neq 3$ and $\ell \geq 7$.} In this case $n \geq 4$. Ito proved in \cite{Ito} that in this case there are only two factorizations $G=D_{2(2^n+1)}N$ and $G=ZN$, where $D_{2(2^n+1)}$ is the dihedral group of order $D_{2(2^n+1)}$, $N$ is the normalizer of a Sylow $2$-subgroup and $Z$ is the cyclic subgroup of order $2^n+1$. The only exact is the second one (see loc.cit.).

$\bullet$ \textit{Case $\ell = 5$.} In this case, Ito showed that we can only concentrate on the scenario where $\ell$ divides $p^{2n}-1$, and no other power of $\ell$ does so. By writing $p$ in the form $5s+t$ we prove that one of $p-1$, $p^2-1$ or $p^4-1$ is divisible by $5$; hence, $n \leq 2$. Explicitly, if $p \equiv 4 \mod 5$ then $n=1$; and if $p \equiv 2 \; \text{or}\; 3 \mod 5$, then $n=2$.

On the other hand, if $G$ has a factorization $G=HK$, and hence a maximal one, Ito proved that one of the factors, say $K$, should be $A_5$, the alternating group in five letters. Hence, $H$ is a subgroup of index less than or equals to $60$.

By using the classification of subgroups of $G=\text{PSL}_2(\mathbb{F}_q)$ \cite[Thm. 6.25]{SuI}, we can show that the least index of a subgroup of the group $G$ is equal to $p^n+1$ (except in the cases $p=2, 3, 5, 7, 11$), result that was known first to Galois in the case $n=1$ and further generalized by Moore and Wiman. Following the argument of the previous paragraph, since $H$ is a subgroup of index $60$, then $p^n+1 \leq 60$, that is, $p^n \leq 59$. 

Depending of the case whether $p \equiv 4 \mod 5$ or $p \equiv 2 \; \text{or}\; 3 \mod 5$ then $p \leq 59$ or $p^2\leq 59$, respectively. From here, depending on the value of the above congruence for $p$, the groups whose factorizations we must study are $LF(2,p)$ or $LF(2,p^2)$, respectively.

For $p \equiv 4 \mod 5$ then $p=19, 29$ or $59$. If $p=59$ then $H = A_5$, the alternating group in five letters, is a maximal subgroup of $G=\text{PSL}_2(\mathbb{F}_{59})$ and it also have as a subgroup a semidirect product of the form $K=\mathbb{Z}_{59} \rtimes \mathbb{Z}_{29}$. It is clear that the orders of $H$ and $K$ are relative primes and their product is equals to the order of $G$. Hence $G = HK$ and is the only possible exact factorization of $G$.

If $p=19$ then $D_{20}$ is a subgroup of $G=\text{PSL}_2(\mathbb{F}_{19})$ and the normalizer $N$ of a $19$-Sylow subgroup has order $19\cdot 9=171$. Hence $G$ has $N D_{20}$ as exact factorization. This normalizer is equals to a semidirect product $\mathbb{Z}_{19} \rtimes \mathbb{Z}_{9}$. Using the classification of subgroups of $G=\text{PSL}_2(\mathbb{F}_{19})$ Ito's found other factorizations of $G$, namely $NA_5$, but it is not exact.

If $p=29$, $N$ of a $19$-Sylow subgroup of $G=\text{PSL}_2(\mathbb{F}_{29})$ (that has order $29\cdot 14 =406$) and $S$ is the subgroup of $N$ of order $29\cdot 7 = 203$ then clearly $G$ admits the exact factorization $G=S \cdot A_5$ and this is the only one.

For $p \equiv 2 \; \text{or}\; 3 \mod 5$, we have $p= 2,3$ or $7$. But we are assuming that only the first power of $5$ divides $p^{2n}-1$, and $7^{4}-1=2^5\cdot 3 \cdot 5^2$, hence we can avoid $p=7$ and then, only remains the cases  $G=\text{PSL}_2(\mathbb{F}_{4})$ or $\text{PSL}_2(\mathbb{F}_{9})$. This groups are isomorphic to $A_6$ and $A_5$, respectively. For the group $\text{PSL}_2(\mathbb{F}_{9})$, all the factorizations found by Ito are not exact and, hence, $f_2(\text{PSL}_2(\mathbb{F}_{9}))=0$. This result also follows from proposition \eqref{prop-A2m}, as we can verify. For the group $G=\text{PSL}_2(\mathbb{F}_{4})$ the only exact factorization is $\mathbb{Z}_5 A_4$, where the octahedral group $A_4$ appears here as the normalizer of a $2$-Sylow subgroup of $G$.

$\bullet$ \textit{Case $\ell = 3$.} As in the above case, we assume that $q$ divides $p^{2n}-1$ only to the first power and $p^{m} \not\equiv 1 \mod q$ for every $m < 2n$. It is clear that $p=3k + r$, with $r \in \{0,1,2\}$ and then $p-1$ or $p^2-1$ is divisible by $3$ thus, $n=1$. This implies that in this case the group $G$ is nothing more than $\text{PSL}_2(\mathbb{F}_{p})$ with $p$ prime and again, as in the above case,  $p \leq 59$, by the aforementioned Galois theorem. By a straightforward verification we can check the only primes satisfying all these conditions together are $p= 2, 5, 11$ and $23$. We are going to count in each case how many exact factorizations there are. 

The group $\text{PSL}_2(\mathbb{F}_{2})=\mathbb{Z}_2\mathbb{Z}_3$ and that's the only factorization. The group $\text{PSL}_2(\mathbb{F}_{5})$ can be written as the product of $\mathbb{Z}_3$ and the normalizer of a $5$-Sylow subgroup $N$ that, in this case, by using the classification of all subgroups of $\text{PSL}_2(\mathbb{F}_{p})$, can be identified with a semi-direct product $\mathbb{Z}_5 \ltimes \mathbb{Z}_4$. There is another factorization by using $D_6$, the dihedral group of order $6$, but by a cardinality argument this is not an exact factorization.

For the case $G=\text{PSL}_2(\mathbb{F}_{11})$ we point first that the normalizer $N$ of a $11$-Sylow subgroup has order $55$. By using the classification of all subgroups of $\text{PSL}_2(\mathbb{F}_{p})$, the group $G$ has four factorizations $D_{12} N, \; A_4 N,\; A_5 \mathbb{Z}_{11}$ and $A_5N$. The only non-exact is the last one (its intersection has order $5$).

Finally, for the group $G=\text{PSL}_2(\mathbb{F}_{11})$, let $N$ be the normalizer of a $23$-Sylow subgroup (whose order is $253$). Ito proved that $G$ has only two factorizations $D_{24}N$ and $S_{4}N$ and, in each case, the order of the factors are coprime. Thus, both of them are exact factorization.
 
$\bullet$ \textit{Case $q = 23$.} The group $G=\text{PSL}_2(\mathbb{F}_{8})$ has two factorizations $\mathbb{Z}_9 N$ and $D_{18}N$, where $N$ is again the normalizer of a Sylow $2$-subgroup. The first one is exact, but the last one doesn't.

$\bullet$ \textit{Case $p = 2^k-1$ a Fermat's prime.} For $p=3$ the only factorization of $G=\text{PSL}_2(\mathbb{F}_{3})$ is $\mathbb{Z}_3\mathbb{Z}_4$. The group $G=\text{PSL}_2(\mathbb{F}_{7})$ has three factorizations $S_4N, \; S_4\mathbb{Z}_7$ and $D_{8}N$, with $N$ the normalizer of a Sywlow $7$-subgroup. By analyzing the order of the subgroups we can verify that only the first factorization is non-exact. 

If $p = 2^k-1$, with $k \geq 3$, then the only factorization of $G=\text{PSL}_2(\mathbb{F}_{p})$ is $D_{2^k}N$, with $N$ the normalizer of a Sylow $p$-subgroup, whose order is $(2^k-1)(2^k-2)$. Thus, the factorization is exact.
\end{proof}

\section{Asymptotic behaviour of $f_{2}$ for the family of alternating groups $A_{2^n}$}
 
Wiegold and Williamson, in \cite[Thm. A]{WW}, found all exact factorization of alternating and symmetric groups, by means of a careful analysis of the actions of primitive permutation groups.  In what follows we are going to use their results \textit{to count} the exact factorizations of some alternating groups.

\textit{Notation:} 
To study the exact factorizations of the symmetric group and its subgroup of even permutations, the alternating group, we will follow the notation of the paper \cite{WW}. For other notions related to finite group theory the reader can refer \cite{Rot}. If $\Omega$ is a finite set of cardinality $n \geq 3$, we are going to denote by $S^{\Omega}$ and $A^{\Omega}$ the symmetric and alternating group on $\Omega$, respectively.

\begin{definition}
If $\sigma$ is a permutation in $S^\Omega$ we define $\text{supp}(\sigma)$, the support of $\sigma$, as the set of all elements $x \in \omega$ such that $\sigma(x) \neq x$. We define the degree of $\sigma$ as the cardinality of $\text{supp}(\sigma)$.
\end{definition}
%
%
%

As a consequence of the Chebyshev's theorem (or from the prime number theorem) we can state the following refinement of Bertrand's conjecture (actually a theorem since the work of Chebyshev but, for historical reasons, it has kept the denomination of conjecture) and whose proof can be found in \cite{Ro}.

\begin{theorem}\label{teon}
If $m$ is  an integer biggest than $8$ then there is three prime numbers $r, q$ and $p$ such that the inequality $m<r<q<p<2m$ holds.
\end{theorem}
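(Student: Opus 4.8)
If $m > 8$ is an integer, then there exist three primes $r, q, p$ with $m < r < q < p < 2m$.

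The plan is to reduce the statement to a single inequality for the prime‑counting function $\pi$ and then combine an elementary asymptotic estimate with a finite check. Since $m\geq 9$, the integer $2m\geq 18$ is even and hence not prime, so the number of primes lying strictly between $m$ and $2m$ is exactly $\pi(2m)-\pi(m)$. Thus the theorem is equivalent to the assertion that $\pi(2m)-\pi(m)\geq 3$ for every integer $m\geq 9$, and it is this inequality I would prove. Note that the hypothesis $m>8$ is sharp: the interval $(8,16)$ contains only the primes $11$ and $13$, so $m=8$ genuinely fails, which already signals that the argument must be run with some care near the threshold.

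First I would establish the inequality for all sufficiently large $m$ by the Chebyshev--Erd\H{o}s method applied to the central binomial coefficient $\binom{2m}{m}$, which satisfies $\binom{2m}{m}\geq 4^m/(2m+1)$. In its prime factorization every prime power dividing it is at most $2m$; primes in $(2m/3,m]$ do not divide it at all (for $m\geq 5$); primes in $(\sqrt{2m},2m/3]$ occur to the first power only; and $\prod_{p\leq x}p<4^x$. Splitting the factorization accordingly yields
$$\prod_{m<p\leq 2m} p \;\geq\; \frac{4^{m/3}}{(2m+1)\,(2m)^{\sqrt{2m}}}.$$
Since each prime counted on the left is at most $2m$, if there are $k$ of them then $(2m)^k$ bounds the left side from above, and taking logarithms gives
$$k \;\geq\; \frac{(\ln 4)\,m/3-\ln(2m+1)}{\ln(2m)}-\sqrt{2m}.$$
The right‑hand side tends to infinity, so it exceeds $3$ for all $m\geq m_0$ with an explicit value $m_0$.

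It then remains to treat the finitely many integers $9\leq m<m_0$, which can be done by direct inspection of a table of primes (or, more economically, by a short induction along a suitably chosen increasing chain of primes in which consecutive members differ by less than a factor of two once one is past the small cases, forcing $(m,2m)$ to capture at least three of them). The main obstacle is quantitative rather than conceptual: the crude binomial estimate produces an $m_0$ large enough to make the finite verification laborious, and pushing the threshold all the way down to the optimal value $m=9$ requires the sharper factorial estimates of Ramanujan's refinement of Bertrand's postulate — which is exactly the content of the reference cited for the theorem. Equivalently, one may invoke that the third Ramanujan prime equals $17$, i.e.\ that $\pi(x)-\pi(x/2)\geq 3$ holds for every real $x\geq 17$; specializing to $x=2m$ yields $\pi(2m)-\pi(m)\geq 3$ precisely on the range $m\geq 9$, completing the proof.
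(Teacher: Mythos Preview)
Your argument is sound. The reformulation as $\pi(2m)-\pi(m)\geq 3$ for $m\geq 9$ is correct (the point that $2m$ is composite is exactly what makes the strict upper inequality cost nothing), the Erd\H{o}s-type estimate via $\binom{2m}{m}$ is carried out correctly and does give $\pi(2m)-\pi(m)\to\infty$, and the closing appeal to the third Ramanujan prime $R_3=17$ nails the sharp threshold $m\geq 9$.

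As for comparison with the paper: there is nothing to compare. The paper does not give a proof of this theorem at all; it merely states the result and points to Rosser--Schoenfeld \cite{Ro} for a proof, describing it as a refinement of Bertrand's postulate obtainable from Chebyshev-type estimates. Your write-up therefore supplies substantially more than the paper does. If anything, your two-stage strategy (crude Erd\H{o}s bound for large $m$, then a finite check or the Ramanujan-prime identification for the tail) is precisely the style of argument the cited literature uses; invoking $R_3=17$ is the cleanest way to reach the exact cutoff and is entirely legitimate here. One small presentational point: you might state explicitly which value of $m_0$ the binomial estimate yields, or else drop that intermediate step and go straight to the Ramanujan-prime fact, since as written the Erd\H{o}s portion is logically superseded by the final sentence.
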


With the aim to study exact factorizations of the alternating group we need some basic notions related to permutation representations of a group acting on a set $\Omega$. 

\begin{definition}
If $G$ is a finite group acting on a set $\Omega$ then a subset $\Delta \subset \Omega$ is said a \emph{block}  if the following conditions holds
\begin{center}
$g\Delta=\Delta$ \quad or \quad $g\Delta\cap \Delta=\varnothing$, \quad for every $g \in G$.
\end{center}
A block $\Delta$ it is called trivial if it consists of a single element or if it is the whole set $\Omega$.
\end{definition}

\begin{definition}
A set $\Omega$ endowed with a transitive action of a group $G$ that doesn't has non trivial blocks it is said to be $G$-\textit{primitive}. 
\end{definition}

\begin{definition}
Let $G$ be a group acting over a set $\Omega$ of $n$ elements. If $k\leq n$ is a positive integer, we said that $\Omega$ is a $k$-homogeneous $G$-set if $G$ acts transitively over the collection of all subsets of $\Omega$ of cardinality $k$. The set $\Omega$ it is said to be $k$-transitive if $G$ acts transitively over the collection of all ordered $k$-tuples of (different) elements of $\Omega$.
\end{definition}

\begin{definition}
Let $G$ be a group acting on a set $\Omega$ of $n$ elements. Given a positive integer $k\leq n$ we say that $\Omega$ is a sharply $k$-transitive $G$-set if $\Omega$ is $k$-transitive and the action of $G$ over the set of $k$-tuples of (distinct) elements of $\Omega$ is free. That is, the only element whose action has fixed point is the identity. 
\end{definition}

\begin{remark}\label{note}
If $K=S^{\Omega}$ or $A^{\Omega}$, let $K=GH$ be an exact factorization. Given an integer $n \geq 8$ let us denote by $p$ the largest prime number less than $n-2$ and, furthermore, for $n=3$, $4$ or $5$ we consider $p =3$ . Also, for $n=6$ or $7$ let's take $p=5$.

Since $p\mid |K|$ we can assume, without loss of generality, that $p \mid |H|$. Hence, $H$ contains a $p-$cycle and we can consider the set $\Gamma \subset \Omega$ defined as the $H$ orbit containing the support of such a $p-cycle$. From now on $H^{\Gamma}$ will denote the group $H$ considered as a permutation group of $\Gamma$; that is, every $h \in H$ is a permutation of $h: \Omega \to \Omega$ then, since $\Gamma \subseteq \Omega$ is a $H$-stable subset of $\Omega$, restricting $h$ to $\Gamma$ gives rise to a permutation $h\mid_\Gamma \in S^\Gamma$. The group $H^\Gamma$ is the group of all such restrictions. From now on we will consider $\Delta = \Omega - \Gamma$ and $|\Delta| = k$.
\end{remark}

The next theorem, stated and proved for the first time in \cite{WW}, characterize all the exact factorizations of the alternating group $A_n$. It is worth to mention that this theorem doesn't show in an explicit way all the  exact factorizations of the alternating group $A_n$, but it gives a procedure to follow to get all of them. From here on we consider $A^\Omega=GH$ with $G \cap H = 1$ and the sets $\Omega=\{1, \ldots, n\}$ and $\Gamma=\{1, \ldots, n-k\}$. Remind that since $\Gamma$ is $H$-stable then $\Delta$ also is and $H=H^{\Gamma} \times H^{\Delta}$.

\begin{remark}
If $q$ is a prime power then we will write $A\Gamma L(1,q)$ for the group of maps $x\rightarrow ax^{\sigma}+b$, defined over $GF(q)$, where $a, b \in GF(q)$, with $a\neq 0 $ and $\sigma \in \Aut GF(q)$. In the case $\sigma= \id$, the identity map from $\Aut GF(q)$ to itself, we have an affine transformation. The group of all such an affine maps maps will be denoted by $AGL(1,q)$. Since every element of $AGL(1,q)$ is a permutation of $GF(q)$, we'll write $ASL(1,q)$ for the group of all pair permutations in $AGL(1,q)$.
\end{remark}

\begin{theorem}{( \cite{WW} Exact Factorizations of $A^{\Omega}$)}\label{FE}
With  all the notation introduced above the following are the only ways to factorize $A^\Omega$ as an exact product $G H$: 
\begin{enumerate}
\item $H^{\Gamma}=A^{\Gamma}$, where $1\leq k \leq 5$, the group $G$ is sharply $k$-transitive over $\Omega$ and $H^{\Delta}=\{1\}$. 
\item $H^{\Gamma}=S^{\Gamma}$. In this case, all the posible factorizations are the following ones: 
\begin{center}
\begin{tabular}{ |p{1cm}||p{0.5cm}|p{0.8cm}|p{2cm}| p{2cm}|p{6.5cm}| }
 \hline
 No. & $k$ & $n$ & $G$ & $H^{\Delta}$ & Generators of $H$ \\
\hline
 $1$ & $4$ & $9$ & $PSL(2,8)$ & $S^{\Delta -\{9\}}$ & $(1,2,3,4,5)(6,7,8),(1,2)(6,7)$ \\
\hline
$2$ & $4$ & $9$ & $P\Gamma L(2,8)$ & $S^{\Delta-\{8,9\}}$ & $(1,2,3,4,5),(1,2)(6,7)$ \\
\hline 
$3$ & $4$ & $33$ & $P\Gamma L (2,32)$ & $S^{\Delta-\{33\}}$ & $(1,2,3,...,29)(30,31,32),(1,2)(30,31)$ \\
\hline 
$4$ & $3$ & $8$ & $AGL(1,8)$ & $S^{\Delta}$ & $(1,2,3,4,5)(6,7,8),(1,2)(6,7)$ \\
\hline 
$5$ &  $3$ & $8$ & $A\Gamma L(1,8)$ & $S^{\Delta-\{8\}}$ & $(1,2,3,4,5),(1,2)(6,7)$ \\
\hline 
$6$ & $3$ & $32$ & $A\Gamma L(1,32)$ & $S^{\Delta}$ & $(1,2,3,...,29)(30,31,32),(1,2)(30,31)$ \\
\hline 
$7$ & $3$ & $q+1$ & $PSL(2,q)$ & $S^{\Delta-\{q+1\}}$ & $(1,2,...,q-2),(1,2)(q-1,q)$ \\
\hline 
$8$ & $2$ & $q$ & $ASL(1,q)$ & $S^{\Delta}$ & $(1,2,...,q-2),(1,2)(q-1,q)$ \\
\hline
\end{tabular}
\end{center}
The column indexed by $k$ in the above table indicates that the group $G$ is sharply $k$-transitive and  the positive integer $q$ satisfies $q\equiv 3 \pmod{4}$.
\item If $H^{\Gamma} \neq A^{\Gamma}, S^{\Gamma}$ then $n=8$, $k=3$ and $A^{\Omega}=\mathbb{Z}_{15}A(3,2)$, where $A(3,2)$ is the affine group of order $1344$. 
\end{enumerate}

\end{theorem}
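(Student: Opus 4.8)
The plan is to follow the argument of Wiegold and Williamson \cite{WW}, whose whole thrust is to reduce the problem to identifying the permutation group $H^{\Gamma}$. I start from the data fixed in Remark \ref{note}: the exact factorization $A^{\Omega}=GH$ (with $G,H\neq 1$), the prime $p$, a $p$-cycle $\sigma\in H$, its $H$-orbit $\Gamma\ni\operatorname{supp}(\sigma)$, the complement $\Delta=\Omega\setminus\Gamma$, and $k=|\Delta|$. The first task is arithmetic: by Theorem \ref{teon}, together with a direct inspection for $3\le n\le 8$, one gets $2p>n$, hence $k\le n-|\Gamma|\le n-p<p$, so in particular $p\nmid|H^{\Delta}|$. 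Since $\Gamma,\Delta$ are $H$-stable, $H$ embeds in $H^{\Gamma}\times H^{\Delta}$ as a subdirect product; once we know (below) that $H^{\Gamma}$ contains $A^{\Gamma}$, Goursat's lemma together with the absence of small proper quotients of $A^{\Gamma}$ (for $m$ not tiny, the remaining cases being checked by hand) forces this embedding to be either the full direct product or an index-two subdirect product glued along a single sign, a point to be pinned down case by case. Thus everything hinges on identifying the transitive permutation group $H^{\Gamma}$ of degree $m:=n-k\ge p$ that contains the $p$-cycle $\sigma|_{\Gamma}$, after which $H^{\Delta}$ and $G$ are read off.

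The next step is to prove $H^{\Gamma}$ is \emph{primitive}. If $\Gamma$ carried a nontrivial $H$-block system with blocks of size $d$, $1<d<m$, then $\sigma|_{\Gamma}$ would either fix every block setwise, forcing $p\le d$, or move some block, forcing $p\mid m/d$; in either case $m\ge 2p>n\ge m$, a contradiction. So $H^{\Gamma}$ is primitive of degree $m$ with $p\le m<2p$ and contains a cycle of prime length $p$ fixing exactly $m-p$ points. At this point I invoke the classical classification of primitive permutation groups containing a cycle of prime length (Jordan's theorem, its refinements by Marggraf, Manning and others, plus the classification of the few multiply transitive groups appearing among the exceptions). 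This yields the dichotomy: either $m-p\ge 3$, so $H^{\Gamma}\supseteq A^{\Gamma}$ and hence $H^{\Gamma}\in\{A^{\Gamma},S^{\Gamma}\}$; or $m-p\le 2$, in which case $H^{\Gamma}$ is one of $A^{\Gamma}$, $S^{\Gamma}$, an affine group $\mathbb{Z}_{p}\le H^{\Gamma}\le A\Gamma L(1,p)$ of degree $p$, a group with $PSL_{2}(\mathbb{F}_{q})\le H^{\Gamma}\le P\Gamma L_{2}(\mathbb{F}_{q})$ of degree $q+1\in\{p,p+1,p+2\}$, or a Mathieu group of the matching degree; each exceptional option constrains $m$, hence $p$, hence $n$, to a finite set.

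To finish I turn exactness into a recursion. Because $H$ preserves the partition $\{\Gamma,\Delta\}$ and $GH=A^{\Omega}$, the group $G$ is transitive on $k$-subsets of $\Omega$, and intersecting with $A':=\operatorname{Stab}_{A^{\Omega}}(\Delta)=(S^{\Gamma}\times S^{\Delta})\cap A^{\Omega}$ yields an exact factorization $(G\cap A')\,H=A'$; projecting this onto $S^{\Gamma}$ and onto $S^{\Delta}$ and feeding in the now-known shape of $H^{\Gamma}$ constrains $H^{\Delta}$ and forces the image of $G$ in $S^{\Omega}$ to act sharply on $k$-sets. The three cases then split as follows. (i) If $H^{\Gamma}=A^{\Gamma}$, the classification of the admissible $G$ forces $H^{\Delta}=\{1\}$, so $H=A^{\Gamma}$ is the pointwise stabilizer of $\Delta$ and $G$ is sharply $k$-transitive on $\Omega$; Jordan's theorem that a sharply $k$-transitive group with $k\ge 6$ has degree at most $k+2$ — incompatible with $m=n-k\ge p\ge 3$ — then gives $1\le k\le 5$, which is case (1). (ii) If $H^{\Gamma}=S^{\Gamma}$, the factor $H^{\Delta}$ may now be a full symmetric group on a sub-block of $\Delta$, and matching the order identity $|G|\,|H|=n!/2$ against the admissible sharply $k$-transitive (equivalently $k$-homogeneous) groups $G$ leaves exactly the eight rows of the table, with the condition $q\equiv 3\pmod 4$ emerging precisely as the parity constraint that forces $PSL_{2}(\mathbb{F}_{q})$ rather than $PGL_{2}(\mathbb{F}_{q})$ — the same mechanism used in the proof of Theorem \ref{finite-PSL}. (iii) If $H^{\Gamma}\notin\{A^{\Gamma},S^{\Gamma}\}$, then $m$, and hence $n=m+k$, ranges over the short list from the second step, and discarding every option that fails exactness leaves only $n=8$ with $A^{\Omega}=\mathbb{Z}_{15}\,A(3,2)$, which is case (3).

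The principal obstacle is the input used in the second step: the classification of all primitive permutation groups of degree $m$ containing a cycle of prime length $p$ — especially in the delicate regime $m-p\le 2$, where affine groups, the two-dimensional projective groups and the Mathieu groups must all be disentangled — is a substantial piece of classical permutation-group theory. A secondary difficulty is the exceptional case (iii), which demands a careful, finite but fiddly, verification that no further sporadic factorization survives the exactness constraints.
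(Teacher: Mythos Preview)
The paper does not actually prove this theorem: its entire proof reads ``The interested reader can consult \cite[Teo.~A]{WW}.'' So there is nothing to compare at the level of argument --- the authors are quoting Wiegold--Williamson's result as a black box, whereas you have sketched the proof itself.

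Your outline is a recognisable and broadly faithful summary of the Wiegold--Williamson strategy: use the Bertrand-type input (Theorem~\ref{teon}) to force $k<p$, show $H^{\Gamma}$ is primitive of degree $m$ with a $p$-cycle and $m<2p$, invoke the Jordan/Marggraf classification of primitive groups containing a short prime cycle to pin down $H^{\Gamma}$, and then use exactness plus the setwise stabilizer factorization to recover $G$ and $H^{\Delta}$. Two small points worth tightening if you want this to stand as a proof rather than a plan: first, the passage from ``$G$ is transitive on $k$-subsets'' to ``$G$ is sharply $k$-transitive'' in case (i) needs the order count $|G|=|A^{\Omega}|/|H|=n!/(2\cdot(n-k)!/2)=n!/(n-k)!$ made explicit, not just the homogeneity; second, your Goursat argument for the structure of $H$ inside $H^{\Gamma}\times H^{\Delta}$ should note that $H$ consists of \emph{even} permutations of $\Omega$, which is what produces the sign-gluing and ultimately the specific $H^{\Delta}$ entries and generators in the table. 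But as a proof proposal this already goes well beyond what the paper supplies.
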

\begin{proof}
The interested reader can consult \cite[Teo. A]{WW}.
\end{proof}

The above theorem provide us with a tool to try to count the number of exact factorizations of $A_n$. In what follows we will shoy how to use it in some particular cases. Before this, we need the definition of the group of semilinear fractional trasformations that are analogous to  the group of Mobius transformations over $\mathbb{C}$ but for any field $\mathbb{F}$.

\begin{definition}\label{fractional-semi}
Given a field $\mathbb{F}$ let us consider the set $\hat{\mathbb{F}}:=\mathbb{F} \cup \{ \infty \}$, where $\infty$ is a symbol for an element that doesn't belong to  $\mathbb{F}$ (for example we can choose $\infty = \mathbb{F}$). If $\sigma \in \Aut (\mathbb{F})$ and $M$ is the matrix $\begin{bmatrix} a & b \\c & d \end{bmatrix} \in \text{GL}_2(\mathbb{F})$, then the \emph{fractional semilinear map} determined by $M$ is the map $f_M^\sigma: \hat{\mathbb{F}} \to \hat{\mathbb{F}}$ defined by the following formula
\begin{equation}
    f_M^\sigma(x):= \begin{cases}
    \dfrac{a x^\sigma + b}{c x^\sigma + d } & \text{if} \quad cx^\sigma + d \neq 0 \quad ,\\
    \infty & \text{if} \quad x = \infty \quad \text{and} \quad c = 0 \quad , \\
    a\; c^{-1} & \text{if} \quad x=\infty \quad \text{and} \quad c \neq 0.
    \end{cases}
\end{equation}
for all $x \in \hat{\mathbb{F}}$ with $cx^\sigma + d \neq 0$. In other case, if $cx^\sigma + d = 0$ then we define $f_M^\sigma(x)(x) = \infty$. 
Remind that $x^\sigma$ denotes $\sigma(x)$. If $\sigma$ is the identity morphism of $\mathbb{F}$, then $f_M^\sigma(x)$ is named  \emph{fractional linear map}, as in the case of the complex numbers.

The set of all the fractional semilinear maps is a group that is denoted by $\Gamma \text{LF}(K)$. In addition, the set of all fractional linear maps is a group denoted by $\text{LF}(K)$.
\end{definition}

To the aims of this work, the group of all fractional semilinear maps has a very important subgroup that is useful to build the Mathieu groups.

\begin{definition}
If $p$ is an odd prime number and $q=p^{2n}$ then, given an involution $\sigma \in \Aut (\text{GF}(q))$, we define the group $\textbf{\text{M}}_q$ (or $\textbf{\text{M}}(q, \sigma)$ if it is necessary to make reference to the involution) as the subgroup of $\Gamma\text{LF}(q)$ consisting of all fractional linear maps $f_M$ when $\det (M)$ is a square in $\mathbb{F}_q$, together with al the fractional semilinear maps $f_M^\sigma$ when $\det (A)$ isn't a square.
\end{definition}

A very important property of the groups $\textbf{\text{M}}_q$ is related to the degree of transitivity of them while acting as group of transformations over $\hat{\mathbb{F}}$.
 
\begin{proposition}
If $p$ is an odd prime number and $q= p^{2n}$ then $\hat{\mathbb{F}}$ is a sharply $3$-transitive $\textbf{\text{M}}_q$-set.
\end{proposition}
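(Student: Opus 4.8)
The plan is to reduce the statement to the well-understood action of $\text{PSL}_2(\mathbb{F}_q)$ on the projective line and then deal with the extra ``semilinear coset'' by hand. First I would set $L$ to be the subgroup of $\text{PGL}_2(\mathbb{F}_q)$ (the fractional linear group) consisting of those Möbius maps $f_M$ with $\det M$ a square in $\mathbb{F}_q^\ast$; this is well defined since $\det(\lambda M)=\lambda^2\det M$, and $L$ is exactly $\text{PSL}_2(\mathbb{F}_q)$, of index $2$ in $\text{PGL}_2(\mathbb{F}_q)$ because $q$ is odd. Fixing a matrix $N_0$ with $\det N_0$ a nonsquare, every generator in the definition of $\textbf{\text{M}}_q$ lies in $L\cup L\,f_{N_0}^\sigma$, and conversely; moreover this union is \emph{disjoint}, because an involution $\sigma$ of $\mathbb{F}_{p^{2n}}$ is nontrivial, so $x\mapsto x^\sigma$ is not a Möbius map over $\mathbb{F}_q$ and hence no $f_M^\sigma$ lies in $\text{PGL}_2(\mathbb{F}_q)$. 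Therefore $|\textbf{\text{M}}_q| = 2\,|\text{PSL}_2(\mathbb{F}_q)| = (q+1)q(q-1)$, which is precisely the number of ordered triples of distinct points of $\hat{\mathbb{F}}$; so it will be enough to prove $\textbf{\text{M}}_q$ is $3$-transitive, and sharpness follows from this count.

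For $3$-transitivity I would use that $L=\text{PSL}_2(\mathbb{F}_q)\le\textbf{\text{M}}_q$ already acts $2$-transitively on $\hat{\mathbb{F}}$ (classical, see e.g. \cite{Rot}), so it suffices to show that the pointwise stabilizer of one ordered pair, say $(0,\infty)$, acts transitively on the remaining $q-1$ points $\mathbb{F}_q^\ast$. An element of $\textbf{\text{M}}_q$ fixing $0$ and $\infty$ has diagonal matrix $\operatorname{diag}(a,1)$, so it is either $x\mapsto a x$ with $a$ a square (the stabilizer inside $L$, with the two orbits ``squares'' and ``nonsquares'' on $\mathbb{F}_q^\ast$) or $x\mapsto a x^\sigma$ with $a$ a nonsquare. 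Given $y\in\mathbb{F}_q^\ast$: if $y=b^2$ then $x\mapsto b^2 x$ sends $1$ to $y$; if $y$ is a nonsquare then, using $1^\sigma=1$, the map $x\mapsto y\,x^\sigma$ sends $1$ to $y$ and lies in $\textbf{\text{M}}_q$ since its determinant $y$ is a nonsquare. (Here one uses that a field automorphism permutes the nonzero squares, as $\sigma(z^2)=\sigma(z)^2$ and $\sigma$ is bijective, so ``nonsquare'' is $\sigma$-stable.) Hence the stabilizer of $(0,\infty)$ is transitive on $\mathbb{F}_q^\ast$, and $\textbf{\text{M}}_q$ is $3$-transitive.

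Sharpness I would then read off the order count, or verify directly: by $3$-transitivity it is enough that the stabilizer of $(0,1,\infty)$ be trivial. An element fixing $0$ and $\infty$ is $x\mapsto a^2 x$ or $x\mapsto a x^\sigma$ with $a$ a nonsquare; also fixing $1$ forces $a^2=1$ in the first case (so the map is the identity) and $a=1$ in the second (impossible, as $1$ is a square). Thus the action on ordered triples of distinct points is free, i.e. $\hat{\mathbb{F}}$ is a sharply $3$-transitive $\textbf{\text{M}}_q$-set.

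The one point that needs care — and where the hypothesis $\sigma^2=\id$ is genuinely used — is that $\textbf{\text{M}}_q$ is closed under composition, so that speaking of ``the action of $\textbf{\text{M}}_q$'' is legitimate. The computation to record is $f_M^\sigma\circ f_N^\sigma = f_{MN^\sigma}$, where $N^\sigma$ is $N$ with $\sigma$ applied entrywise; this is linear because $(x^\sigma)^\sigma=x$, and its determinant $\det M\cdot\sigma(\det N)$ is a product of two nonsquares, hence a square, so the composite lies in $L\subseteq\textbf{\text{M}}_q$. The linear–semilinear and linear–linear cases are the same bookkeeping with $\det$, and I expect this verification — together with keeping the square/nonsquare dictionary straight — to be the only real ``work'' in the argument; everything else is the standard correspondence between $2\times2$ matrices over $\mathbb{F}_q$ and Möbius maps on $\hat{\mathbb{F}}$, plus the fact that the nonzero squares form an index-$2$ subgroup of $\mathbb{F}_q^\ast$ for $q$ odd.
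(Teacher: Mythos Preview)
Your argument is correct. The paper does not actually prove this proposition: its entire proof is the citation ``\cite[Teo.~9.49]{Rot}'', deferring to Rotman. You have supplied a self-contained proof where the paper gives none, and your line of attack --- compute $|\textbf{\text{M}}_q|=2|\text{PSL}_2(\mathbb{F}_q)|=(q+1)q(q-1)$, reduce sharp $3$-transitivity to ordinary $3$-transitivity by the order count, and then obtain $3$-transitivity from the $2$-transitivity of $\text{PSL}_2$ together with transitivity of the $(0,\infty)$-stabilizer on $\mathbb{F}_q^\ast$ --- is exactly the standard argument one finds in Rotman. The care you take with closure under composition (using $\sigma^2=\id$ to get $f_M^\sigma\circ f_N^\sigma=f_{MN^\sigma}$ linear with square determinant) and with the disjointness of the two cosets (no $f_M^\sigma$ is a M\"obius map when $\sigma\neq\id$) fills in precisely the details that the cited reference handles; nothing is missing. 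What your write-up buys over the paper's bare citation is that it makes visible \emph{where} each hypothesis is used: $q$ odd for the index-$2$ square/nonsquare dichotomy, $q=p^{2n}$ for the existence of a nontrivial involutory automorphism, and $\sigma^2=\id$ for the group law.
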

\begin{proof}
\cite[Teo. 9.49]{Rot}.
\end{proof}

\begin{proposition}\label{prop-A2m}
 If $m$ is an odd positive integer then 
\begin{equation}
f_2(A_{2m})=
\begin{cases}
2 & \text{If $2m=q^r+1$ with $q$ odd and $r$ even;}\\
1 & \text{If $2m=q^r+1$, but any of $q$ or $r$ don't satisfy the above conditions;}\\
0 & \text{in another case.}
\end{cases}
\end{equation}
\end{proposition}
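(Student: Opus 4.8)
The plan is to run a case analysis on the exact factorizations $A_{2m}=GH$ classified by Theorem \ref{FE}, using crucially that $n=2m$ is even and $m$ is odd, so $n\equiv 2\pmod 4$. The key preliminary observation is that in every factorization of Theorem \ref{FE} one of the factors, which we will call $G$, is \emph{sharply $k$-transitive} on $\Omega$ for some $k\in\{1,2,3,4,5\}$, while the other factor restricts to the full symmetric or alternating group on the complement $\Gamma$ of a small set $\Delta$ with $|\Delta|=k$. Since $|A_{2m}|=\tfrac{1}{2}(2m)!$ and $|G|=2m(2m-1)\cdots(2m-k+1)$, exactness forces $|H|=\tfrac{1}{2}(2m-k)!$, and the table entries of Theorem \ref{FE} then pin down which $(k,n)$ are even admissible. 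First I would dispose of the sporadic-looking entries (the rows with $n=9$, $n=33$, $n=8$, $n=32$, and the case (3) with $n=8$): none of these has $n$ of the form $2m$ with $m$ odd, since $9,33$ are odd and $8,32\equiv 0\pmod 4$, so they contribute nothing. This leaves exactly the \emph{infinite} families: family (1) with $H^\Gamma=A^\Gamma$ and $G$ sharply $k$-transitive ($1\le k\le 5$), row 7 with $G=PSL(2,q)$ and $n=q+1$, and row 8 with $G=ASL(1,q)$ and $n=q$.

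Next I would use the classification of sharply $k$-transitive groups. For $k\ge 4$, sharply $k$-transitive groups exist only in degree $n=k+1$ or $n=k+2$ (the symmetric and alternating groups themselves, which would make the factorization trivial since then $H$ would have order $1$ or $2$ and cannot satisfy $G\cap H=1$ with $G=A^\Omega$ or $S^\Omega$ as a proper factor), together with the Mathieu groups $M_{11}$ (sharply $4$-transitive, degree $11$) and $M_{12}$ (sharply $5$-transitive, degree $12$). Since $11$ is odd and $12\equiv 0\pmod 4$, neither occurs for $n=2m$, $m$ odd. So only $k=1,2,3$ survive. For $k=1$, a sharply $1$-transitive (i.e.\ regular) $G\le A_{2m}$ of order $2m$ inside $A^\Gamma$-complement context: this is family (1) with $k=1$, and such a regular subgroup of $S_{2m}$ acting in its regular representation lies in $A_{2m}$ iff it has no element of order $2m/(\text{odd})$... more precisely, a regular permutation group of even order $2m$ consists of even permutations iff its Sylow $2$-subgroup is noncyclic; but here $2m\equiv 2\pmod 4$ means the Sylow $2$-subgroup has order $2$, hence is cyclic, hence $G\not\le A_{2m}$. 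Thus $k=1$ never contributes when $m$ is odd. Here I should double-check: for $k=1$, $H^\Delta=\{1\}$ and $|\Delta|=1$, so $H=H^\Gamma=A^\Gamma=A_{2m-1}$; this is an \emph{honest} factorization $A_{2m}=G\cdot A_{2m-1}$ with $G$ regular of order $2m$, but as just argued, $G\le A_{2m}$ fails. (Alternatively, $A_{2m-1}\cdot G$ with $|G|=2m$: every such $G$ contains a $2m$-cycle or an element of order $2$ being odd — needs care, this is the point to verify.)

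This leaves $k=2$ and $k=3$, which is where $2m=q^r+1$ enters. For $k=3$: family (1) with $k=3$ requires $G$ sharply $3$-transitive on $2m$ points; by the classification these are exactly $PGL(2,q')$ for $q'$ a prime power with $q'+1=2m$, and the Mathieu-type groups $\mathbf{M}_{q'}$ with $q'=p^{2n'}$ and $q'+1=2m$. Row 7 of Theorem \ref{FE} is $PSL(2,q)$ with $q+1=2m$, $q\equiv 3\pmod 4$ — but $q\equiv 3\pmod 4$ gives $q+1\equiv 0\pmod 4$, contradicting $2m\equiv 2\pmod 4$, so row 7 contributes nothing. For family (1) with $k=3$: $PGL(2,q')$ lies in $A_{q'+1}$ iff $q'\equiv 3\pmod 4$ (a standard fact), again excluded; and $\mathbf M_{q'}$ with $q'=p^{2n'}$ is sharply $3$-transitive and, by the Proposition just before this one, lies in $A_{\hat{\mathbb F}}$ when $q'$ is an odd square — this is the source of the ``$q$ odd and $r$ even'' case giving an \emph{extra} factorization, so $f_2=2$ there. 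For $k=2$: family (1) with $k=2$ needs $G$ sharply $2$-transitive on $2m$ points, which forces $2m=q^r$ a prime power, impossible since $2m\equiv 2\pmod 4$ (unless $2m=2$, excluded); row 8, $G=ASL(1,q)$, $n=q$, needs $q=2m$, again a prime power $\equiv 2\pmod4$, impossible. So $k=2$ contributes nothing, and we are left with: the ``generic'' contribution comes from a single sharply $3$-transitive group (giving $f_2=1$ when $2m=q^r+1$ is a prime power plus one but not an odd-square plus one), an extra one from $\mathbf M_{q^r}$ when $q$ is odd and $r$ even (giving $f_2=2$), and nothing at all when $2m-1$ is not a prime power (giving $f_2=0$). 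Assembling these three verdicts yields the stated piecewise formula.

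The main obstacle, and the step I would spend the most care on, is the precise bookkeeping of \emph{which} sharply $2$- and $3$-transitive groups on $n$ points actually lie inside $A_n$ rather than merely inside $S_n$, since Theorem \ref{FE}(1) demands $G\le A^\Omega$; this is exactly the parity dichotomy ($PGL(2,q)\le A_{q+1}\iff q\equiv3\pmod4$, and $\mathbf M_q\le A_{q+1}\iff q$ is an odd square) that produces the split between $f_2=1$ and $f_2=2$, and one must also make sure no two of the surviving $G$'s give \emph{equivalent} factorizations in the sense of the paper's equivalence relation on $\mathcal{EF}(A_{2m})$ (distinguishing $PGL$ from $\mathbf M_q$ as abstract groups, and checking the ``$H$'' factors are non-isomorphic or that the pair is genuinely new). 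A secondary subtlety is confirming that when $2m-1$ is a prime power $q^r$ the sharply $3$-transitive group of the right parity is \emph{unique up to isomorphism} (so the count is exactly $1$, not more), which follows from Zassenhaus's classification of sharply $3$-transitive finite groups together with the parity constraint.
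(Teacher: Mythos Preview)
Your overall strategy is the paper's own: apply Theorem~\ref{FE}, discard the table rows and case~(3) via $2m\equiv 2\pmod 4$, and in case~(1) eliminate $k=1$ (the involution in a regular group of order $2m$ is a product of $m$ transpositions, hence odd), $k=2$ (Zassenhaus forces $2m$ to be a prime power), and $k=4,5$ (Jordan forces $|\Omega|\in\{11,12\}$), leaving only $k=3$ and Zassenhaus's list $\{\mathrm{LF}(q^r),\mathbf{M}_{q^r}\}$. The paper stops right there and reads off the trichotomy directly from that list; it does \emph{not} carry out the additional parity check you propose of deciding which of these groups lie in $A^{\Omega}$ rather than merely in $S^{\Omega}$.

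That extra parity step is where your proposal has a genuine gap. The assertion ``$PGL(2,q')\le A_{q'+1}\iff q'\equiv 3\pmod 4$'' is false: for every \emph{odd} prime power $q'$ the element $x\mapsto\alpha x$, with $\alpha$ a primitive root of $\mathbb{F}_{q'}^{\times}$, lies in $PGL(2,q')\setminus PSL(2,q')$ and acts on the projective line as a single $(q'-1)$-cycle fixing $0$ and $\infty$, hence is an odd permutation. Since $q'=2m-1$ is always odd here, your own criterion (corrected) excludes $PGL(2,q')$ in \emph{every} case, not just when $q'\not\equiv 3\pmod 4$. This makes your final count internally inconsistent: having excluded $PGL$, you still claim a ``generic'' sharply $3$-transitive factor giving $f_2=1$ and call $\mathbf{M}_{q'}$ an ``extra'' one giving $f_2=2$, but you never identify what the first factorization is. To salvage the argument along your lines you would have to determine, for each odd prime power $q'=2m-1$, exactly which sharply $3$-transitive groups of degree $q'+1$ embed in $A_{q'+1}$ (and in particular settle the parity of $\mathbf{M}_{q'}$), and then recount; as written the values $1$ and $2$ do not follow from your reasoning.
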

\begin{proof}
If $A_{2m}$ denotes the alternating group in $2m$ letters, with $m$ odd, then, by the theorem \eqref{FE}, all exact factorizations $A_{2m}=G \; H$ are determined by $H^\Gamma$. Indeed, if $H^\Gamma=A^\Gamma$ then $H=H^\Gamma$ (because $H^\Delta=\{ 1 \}$) and, moreover, the action of $G$ on $\Omega$ is  sharply $k$-transitive, with $1 \leq k \leq 5$. Let's analyse independently each case for k:

{\bf Case $k = 1$:} In this case we have $|\Gamma| = 2m-1$ and  $|G| = 2m$. Then $G$ contains a permutation of order $2$ that having no fixed points. Hence, this permutation  is the composition of a number $m$ of $2$-cycles, which is a contradiction since $m$ is odd by hypothesis and this permutation must belong to the alternating group.

{\bf Case $k=2$:} In this case $\Gamma = 2m-2$ and, moreover, $G$ is doubly transitive group on a set $\Omega$ of cardinal $2m$.

According to a classical Zassenhaus' result  \cite[Teo. 20.3]{Pass}, the set of permuted points can be identified with $\text{GF}(q^r)$, the finite (Galois) field in $q^r$ elements, or $G$ have degree $5^2,7^2,11^2,23^2,29^2$ o $59^2$. Neither of these last situations is possible since $G$ has even degree. Now, in the first case, if $2m = q^r$ then $q = 2$ and  $m$ is a power of $2$, contradicting  that $m$ is odd. As result, $A_{2m}$ has no factorization in this case.

{\bf Case $k=3$:}  In this case $|\Gamma| = 2m-3$ and, moreover, $G$ is a $3$-transitive  group on $\Omega$ of order $2m(2m-1)(2m-2)$. Again, a classical Zassenhaus' result \cite[Thm 20.5]{Pass} implies that $2m-1$ must be a power $q^r$ for some prime $q$, that is, $2 m= q^r+1$. By the same result $G$ can only be $\text{LF}(q^r)$ or $\textbf{\text{M}}_{q^r}$, depending on the nature of $q$ and $r $. If $q$ is odd and $r$ is even then $G$ can be one of these two groups; otherwise, $G$ can only be $\text{LF}(q^r)$. Hence
\begin{equation*}
A^{\Omega}=
\begin{cases}
A^{\Gamma}\; \text{LF}(q^r) \quad \text{or} \quad A^{\Gamma} \; \textbf{\text{M}}_{q^r} & \text{If $q$ is odd y $r$ is even} \\
A^{\Gamma} \; \text{LF}(q^r) & \text{in another case.}
\end{cases}
\end{equation*}

{\bf Case $k = 4$:}  In this case $|\Gamma| = 2m-4$ and, furthermore, $G$ is a $4$-transitive group on $\Omega$, which is a set of cardinal $2m$. In this case, we can also apply a Jordan's theorem \cite[Thm. 21.5]{Pass} stating that $\Omega$, in this case, should be a set of cardinal $11$. Which is a contradiction.

{\bf Case $k = 5$:} In this case, the same Jordan's results \cite[Thm. 21.5]{Pass} would also imply that $\Omega$ has cardinal $12$ and therefore $m = 6$, which is a contradiction.

Now, if $H^{\Gamma}=S^{\Gamma}$ then, by theorem \cite[Thm. A]{WW} we would have $2m=q+1$ and $q\equiv 3 \mod{4}$, which is a contradiction considering $m$ is odd.
\end{proof}

The following lemma, whose proof can be found in the appendix theorem \eqref{proof-used-result}, is the main tool to determine the asymptotic behaviour of $f(A_{2^n})$, for any positive integer $n$.

\begin{lemma}\label{teo-A2n}
If $n$ is a positive integer then $f_{2}(A_{2^n})$, the number of exact factorizations of the alternating group $A_{2^n}$, is bounded by
\begin{equation}\label{cota-factor}
    2^{\frac{2}{27}n^2(n-6)} \leq f_{2}(A_{2^n}) \leq 2^{\left(\frac{2}{27} n^3 + O(n^{5/2})\right)} +\left(n2^{2n}\right)^{\left(\frac{1}{4}+o(1)\right)(2n + \log_2 n)} +1.
\end{equation}
\flushright $\square$
\end{lemma}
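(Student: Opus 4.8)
The plan is to run Theorem \eqref{FE} with $|\Omega|=2^{n}$ and to show that, apart from a bounded remainder, $f_{2}(A_{2^{n}})$ is governed by the regular (sharply $1$-transitive) subgroups of $A_{2^{n}}$. I would first dispose of the small contributions. In family (1) with $k=4,5$, Jordan's theorem \cite[Thm. 21.5]{Pass} forces $|\Omega|\in\{11,12\}$, which is impossible. In family (1) with $k=3$ the only sharply $3$-transitive group of degree $2^{n}$ ($n\ge2$) is $\operatorname{PGL}(2,2^{n}-1)$ --- this requires $2^{n}-1$ to be a prime power, and the twisted $\mathbf{M}$-groups would require $2^{n}-1$ to be a square, which never happens since $2^{n}-1\equiv3\pmod4$ for $n\ge2$ --- and even then $\operatorname{PGL}(2,2^{n}-1)\not\le A_{2^{n}}$, because multiplication by a generator of $\mathbb{F}_{2^{n}-1}^{\times}$ fixes two points and cyclically permutes the remaining $2^{n}-2$ as a single odd cycle; so $k=3$ contributes nothing for $n\ge2$. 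Family (3) forces $|\Omega|=8$, and in family (2) the ``$n$''-column of the Wiegold--Williamson table only allows $|\Omega|\in\{8,9,32,33,q,q+1\}$, so it adds at most $O(1)$ factorizations (entries 4--6 only when $2^{n}\in\{8,32\}$, entry 7 at most when $2^{n}-1$ is a Mersenne prime, entry 8 never). Hence, up to an additive $O(1)$, $f_{2}(A_{2^{n}})$ is the number of inequivalent factorizations from $k=1$ plus the number from $k=2$ in family (1).

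For the $k=1$ count, a factorization is $A^{\Omega}=G\,A^{\Gamma}$ with $G$ regular of order $2^{n}$ and $H=A^{\Gamma}\cong A_{2^{n}-1}$; as $|A_{2^{n}-1}|\ne2^{n}$ for $n\ge2$, two such are equivalent iff their regular groups are isomorphic, and conversely every group of order $2^{n}$ that embeds regularly in $A_{2^{n}}$ gives one (fix any point as $\Delta$). Now a group $G$ of order $2^{n}$ embeds regularly in $A_{2^{n}}$ iff its left regular representation is even: an element of order $2^{j}$ acts there as $2^{n-j}$ disjoint $2^{j}$-cycles, of total sign $(-1)^{2^{n-j}}$, which is $+1$ unless $j=n$, so the regular representation is even iff $G$ is non-cyclic. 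Hence the number of inequivalent $k=1$ factorizations equals $g(2^{n})-1$, where $g(m)$ is the number of isomorphism types of groups of order $m$.

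The lower bound is then immediate: $f_{2}(A_{2^{n}})\ge g(2^{n})-1$, and Higman's lower estimate for the enumeration of $p$-groups exhibits at least $2^{\frac{2}{27}n^{2}(n-6)}$ groups of order $2^{n}$ of nilpotency class $2$, all non-abelian hence non-cyclic, giving $f_{2}(A_{2^{n}})\ge2^{\frac{2}{27}n^{2}(n-6)}$. For the upper bound, Sims's estimate gives $g(2^{n})\le2^{\frac{2}{27}n^{3}+O(n^{5/2})}$, accounting for the $k=1$ term; it remains to bound the number of isomorphism types of sharply $2$-transitive groups $G$ of degree $2^{n}$. Such a $G$ has a regular normal elementary abelian subgroup $V\cong\mathbb{F}_{2}^{\,n}$ and a point stabilizer $G_{\alpha}\le\operatorname{GL}_{n}(2)$ of order $2^{n}-1$ acting regularly --- hence irreducibly --- on $V\setminus\{0\}$. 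Since $G_{\alpha}$ has odd order (so is solvable) and its order is divisible by a primitive prime divisor of $2^{n}-1$ (Zsigmondy \cite{Zsig}, valid for every $n\ge2$ other than $n=6$), $G_{\alpha}$ is conjugate into the Singer normalizer $\Gamma L(1,2^{n})$, and therefore $G$ is conjugate into $A\Gamma L(1,2^{n})$. Consequently the number of such $G$ up to isomorphism is at most the number of subgroups of $A\Gamma L(1,2^{n})$; since $|A\Gamma L(1,2^{n})|=n\,2^{n}(2^{n}-1)<n\,2^{2n}$, and a finite group of order $M$ has at most $M^{(\frac14+o(1))\log_{2}M}$ subgroups, this is at most $\bigl(n2^{2n}\bigr)^{(\frac14+o(1))(2n+\log_{2}n)}$. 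Adding the three bounds, and absorbing the exceptional value $n=6$ together with the $O(1)$ from families (2) and (3) into the trailing $+1$, gives \eqref{cota-factor}.

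The hardest part will be this final bookkeeping: one must check that the Wiegold--Williamson list really adds only a bounded number of factorizations when $|\Omega|=2^{n}$ (the sporadic table entries and the exclusion of $k=3$ all hinge on the parity computations above), and the confinement of a sharply $2$-transitive $G$ into $A\Gamma L(1,2^{n})$ relies on combining the structure theory of finite sharply $2$-transitive groups with the classification of irreducible solvable linear groups of order divisible by a primitive prime divisor, together with the general bound on the number of subgroups. The lower bound, by contrast, is essentially Higman's construction read off through the $k=1$ case, once the parity computation pinning down the regular subgroups of $A_{2^{n}}$ is in hand.
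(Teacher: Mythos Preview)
Your proposal is correct and follows essentially the same route as the paper's proof in the appendix: both reduce via Theorem~\ref{FE} to counting regular subgroups of $A_{2^n}$ (giving $\text{gnu}(2^n)-1$ through the identical parity computation on the left regular representation), bound the sharply $2$-transitive contribution by the number of subgroups of $A\Gamma L(1,2^n)$ via the Borovik--Pyber--Shalev estimate \cite{BPS}, and then invoke Higman's lower bound and the Sims--Newman--Seeley upper bound on $\text{gnu}(2^n)$ from \cite{BNV}. The only noteworthy difference is that the paper reaches $G\le A\Gamma L(1,2^n)$ in the $k=2$ case by a direct appeal to Zassenhaus's classification of sharply $2$-transitive groups \cite[Thm.~20.3]{Pass}, which is cleaner than your primitive-prime-divisor/Singer-normalizer route (and sidesteps the Zsigmondy exception at $n=6$ that you have to absorb by hand); conversely, your parity argument eliminating the $k=3$ contribution entirely is sharper than the paper's treatment, which merely bounds that term by~$2$ and relegates the Mersenne/Mih\u{a}ilescu discussion to a closing remark.
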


Since is a very difficult problem to determine the number of subgroups of a given order $m$, this difficulty is inherited by the problem to calculate $f_2(G)$ for $G$ the alternating group. The last result of the paper is an asymptotic expression for the number of exact factorizations of $A_n$ in the case $n$ is a power of $2$.

\begin{theorem}\label{teoa2n}
The number of exact factorizations of the alternating group $A_{2^n}$ satisfies
\begin{equation}
    f_2(A_{2^n})=2^{\frac{2}{27}n^3 + O(n^{5/2})}.
\end{equation}
\end{theorem}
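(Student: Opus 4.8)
The plan is to derive Theorem \ref{teoa2n} directly from the two-sided bound in Lemma \ref{teo-A2n}, showing that the lower bound and the dominant term of the upper bound have the same logarithmic order of growth, namely $\tfrac{2}{27}n^3$, while all remaining terms are of strictly smaller order and hence absorbed into the error term. Concretely, taking $\log_2$ in \eqref{cota-factor}, the lower bound gives $\log_2 f_2(A_{2^n}) \geq \tfrac{2}{27}n^2(n-6) = \tfrac{2}{27}n^3 - \tfrac{12}{27}n^2 = \tfrac{2}{27}n^3 + O(n^2)$, which is already of the form $\tfrac{2}{27}n^3 + O(n^{5/2})$ since $O(n^2) \subseteq O(n^{5/2})$.

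For the upper bound, the first summand is $2^{\frac{2}{27}n^3 + O(n^{5/2})}$, whose logarithm is exactly $\tfrac{2}{27}n^3 + O(n^{5/2})$. I would then check that the other two summands are negligible after taking logarithms: the quantity $\bigl(n2^{2n}\bigr)^{(\frac14 + o(1))(2n + \log_2 n)}$ has $\log_2$ equal to $(\tfrac14 + o(1))(2n + \log_2 n)(2n + \log_2 n) = (\tfrac14 + o(1))(2n + \log_2 n)^2 = (1 + o(1))n^2$, which is $O(n^2) \subseteq O(n^{5/2})$; and the trailing $+1$ contributes nothing asymptotically. Since for positive reals $a, b, c$ one has $\log_2(a + b + c) \leq \log_2(3\max\{a,b,c\}) = \max\{\log_2 a, \log_2 b, \log_2 c\} + \log_2 3$, the logarithm of the full upper bound is $\max\{\tfrac{2}{27}n^3 + O(n^{5/2}),\, O(n^2),\, 0\} + O(1) = \tfrac{2}{27}n^3 + O(n^{5/2})$.

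Combining the two estimates, $\log_2 f_2(A_{2^n})$ is squeezed between $\tfrac{2}{27}n^3 + O(n^2)$ and $\tfrac{2}{27}n^3 + O(n^{5/2})$, both of which equal $\tfrac{2}{27}n^3 + O(n^{5/2})$; exponentiating back yields $f_2(A_{2^n}) = 2^{\frac{2}{27}n^3 + O(n^{5/2})}$, as claimed. The only genuinely delicate point is bookkeeping the $o(1)$ factor in the middle summand of the upper bound: one must confirm that $(\tfrac14 + o(1))(2n + \log_2 n)^2$ really is $o(n^3)$ — indeed even $O(n^2)$ — so that it cannot interfere with the cubic main term; this is immediate once the expression is expanded, since the leading behaviour is $n^2$ up to the $o(1)$ perturbation. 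Everything else is routine manipulation of asymptotic notation, so the substantive mathematical content of the theorem lies entirely in Lemma \ref{teo-A2n}, whose proof is deferred to the appendix.
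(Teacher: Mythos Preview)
Your proposal is correct and follows essentially the same approach as the paper: both arguments start from the two-sided bound in Lemma~\ref{teo-A2n}, take $\log_2$, and verify that the middle summand $(n2^{2n})^{(\frac14+o(1))(2n+\log_2 n)}$ contributes only $O(n^2)$ on the logarithmic scale, hence is absorbed by the $O(n^{5/2})$ error. The only cosmetic difference is that the paper factors out the dominant term and shows the remaining ratio tends to zero inside a $\log_2(1+\cdot)$, whereas you use the cleaner inequality $\log_2(a+b+c)\le \max\{\log_2 a,\log_2 b,\log_2 c\}+\log_2 3$; the content is identical.
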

\begin{proof}
The proof boils down to adjust, in a convenient way, the inequality \eqref{cota-factor} in the previous theorem \eqref{teo-A2n}. In fact, since
$$f_2(A_{2^n}) \leq 2^{\frac{2}{27}n^3 + O(n^{5/2})} + (n 2^{2n})^{(\frac{1}{4} + o(1))(2n+\log_2n)}+1 ,$$
then, if $A \in \mathbb{R}$ is a positive constant such that $|O(n^{\frac{5}{2}})|\leq An^{\frac{5}{2}}$, we can write 
$$      \log_2f_2(A_{2^n}) \leq \frac{2}{27}n^3+An^{\frac{5}{2}} +\log_2\left(1+ \frac{(n2^{2n})^{{(\frac{1}{4} + o(1))(2n+\log_2n)}} + 1}{2^{\frac{2}{27}n^3+An^{5/2}}} \right).$$             
\noindent Hence
\begin{align*}
  \frac{1}{n^{\frac{5}{2}}} \left(\log_2f_2(A_{2^n})-\frac{2}{27}n^3\right) \leq A+\frac{1}{n^{\frac{5}{2}}}\log_2\left(1+\frac{(n2^{2n})^{{(\frac{1}{4} + o(1))(2n+\log_2n)}}+1}{2^{\frac{2}{27}n^3+An^{\frac{5}{2}}}} \right). \\ 
\end{align*}
This expression enable us to state that $\frac{1}{n^{5/2}} \left(\log_2f_2(A_{2^n})-\frac{2}{27}n^3\right)$ is bounded from above if the argument of the logarithm function on the right is. Which, in turn, is bounded above if
$$\frac{(n2^{2n})^{{(\frac{1}{4} + o(1))(2n+\log_2n)}}}{2^{\frac{2}{27}n^3+An^{\frac{5}{2}}}} $$ 
also is. 

With $g(n)=o(1)$, there is $N_1 \in \mathbb{N}$ such that for all $n>N$ we have
 $|g(n)|\leq \frac{3}{4}$. Then, for all $n \geq N_1$
 \begin{align*}
   \frac{(n2^{2n})^{{(\frac{1}{4} + o(1))(2n+\log_2n)}}+1}{2^{\frac{2}{27}n^3+An^{\frac{5}{2}}}} &\leq \frac{(n2^{2n})^{{(2n+\log_2n)}}+1}{2^{\frac{2}{27}n^3+An^{\frac{5}{2}}}}
  \\
  &=\frac{n^{{2n+\log_2n}}2^{{4n^2+2n\log_2n}}+1}{2^{\frac{2}{27}n^3+An^{\frac{5}{2}}}}, \quad \text{but $\log_2 n < n$ then} \\
  &\leq \frac{n^{{3n}}2^{{6n^2}}+1}{2^{\frac{2}{27}n^3+An^{\frac{5}{2}}}}\\
  &\leq \frac{(2^n)^{{3n}}2^{{6n^2}}}{2^{\frac{2}{27}n^3+An^{\frac{5}{2}}}}+\frac{1}{2^{\frac{2}{27}n^3+An^{\frac{5}{2}}}} \\
  &=\frac{2^{{9n^2}}}{2^{\frac{2}{27}n^3+An^{\frac{5}{2}}}}+\frac{1}{2^{\frac{2}{27}n^3+An^{\frac{5}{2}}}} .\\
   \end{align*}

Last two terms tends to $0$ when $n \to \infty$, then there is $N_2\in \mathbb{N}$  such that for all $n>N_2$ we have 
 $$\frac{2^{{9n^2}}}{2^{\frac{2}{27}n^3+An^{\frac{5}{2}}}}+\frac{1}{2^{\frac{2}{27}n^3+An^{\frac{5}{2}}}} < 1.$$
With $M=Max\{N_1,N_2\}$, then for all $n>M$ the following holds 
 $$\frac{1}{n^{\frac{5}{2}}} \left(\log_2f_2(A_{2^n})-\frac{2}{27}n^3\right) < 1,$$

On the other side, by the left part of inequality \eqref{cota-factor} in lemma \eqref{teo-A2n}, we have $2^{\frac{2}{27}n^2(n-6)} \leq f_{2}(A_{2^n}).$
By taking logarithms we arrive at  $-\frac{12}{27}n^2 \leq \log_2 (f_{2}(A_{2^n}))-\frac{2}{27}n^3$ and
$$-\dfrac{1}{n^{1/2}}<\frac{1}{n^{\frac{5}{2}}}\left(\log_2f_2(A_{2^n})-\frac{2}{27}n^3\right).$$
Hence $\frac{1}{n^{\frac{5}{2}}}\left(\log_2f_2(A_{2^n})-\frac{2}{27}n^3\right)$ is also bounded from below.  From all above we get the desired result
 \begin{equation*}
    f_2(A_{2^n})=2^{\frac{2}{27}n^3 + O(m^{5/2})}.
\end{equation*}
\end{proof}
 
\end{section}

\section{Some questions about the function $f_2$}

In table \eqref{table-total-number} we summarize some results of the work and the total number of factorization of groups up to order $31$.

\begin{table}[h]
\begin{tabular}{|c||c|c| }
 \hline
Order $n$ & $gnu(n)$ & $\sum_{|G|=n}f_2(n)$ \\
 \hline
 $p$ prime & $1$ & $0$ \\
\hline 
$4$ & $2$ & $1$ \\
\hline 
$6$ & $2$ & $2$ \\
\hline
$8$ & $5$ & $4$ \\
\hline 
$9$ & $2$ & $1$ \\
\hline
$10$ & $2$ & $2$ \\
\hline
$12$ & $5$ & $8$ \\
\hline
$14$ & $2$ & $2$ \\
\hline 
$15$ & $1$ & $1$ \\
\hline 
$16$ & $14$ & $19$ \\
\hline 
$18$ & $5$ & $7$ \\
\hline 
$20$ & $5$ & $8$ \\
\hline
$21$ & $2$ & $2$ \\
\hline
$22$ & $2$ & $2$ \\
\hline
$24$ & $6$ & $17$ \\
\hline
$25$ & $2$ & $2$ \\
\hline 
$26$ & $2$ & $2$ \\
\hline
$27$ & $5$ & $4$ \\
\hline
$28$ & $4$ & $7$ \\
\hline 
$30$ & $4$ & $8$ \\
\hline 
\end{tabular}
\caption{Total number of exact factorizations, by order, for low order groups}
\label{table-total-number}
\end{table}

Although we know that the above list is not comprehensive enough, based on the previous discussion we can ask for the nature of the total number of factorizations of groups once the order is fixed. We have particular interest in the following questions:
\begin{itemize}
    \item If $n$ is a positive integer, is $\sum_{|G|=n}f_2(n)$ always a prime number or a power or two?
    \item If not, what is the least value of $n$ giving a counterexample? 
\end{itemize}

\begin{section}{Appendix}

In this section we proof lemma \eqref{teo-A2n} that is the main result to study the asymptotic behaviour of $f_2(A_{2^n})$.

\begin{lemma}\label{proof-used-result}
If $n$ is a positive integer then $f_{2}(A_{2^n})$, the number of exact factorizations of the alternating group $A_{2^n}$, is bounded by
\begin{equation}
    2^{\frac{2}{27}n^2(n-6)} \leq f_{2}(A_{2^n}) \leq 2^{\left(\frac{2}{27} n^3 + O(m^{5/2})\right)} +(n2^{2n})^{\left(\frac{1}{4}+o(1)\right)(2n + \log_2 n)} +1.
\end{equation}
\end{lemma}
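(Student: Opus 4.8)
The strategy is to read off every exact factorization of $A^{\Omega}$ with $|\Omega|=2^{n}$ from the Wiegold--Williamson classification, Theorem~\ref{FE}, and to show that --- in contrast with Proposition~\ref{prop-A2m}, where the $k=1$ case was killed because the number of letters was twice an odd number --- here the $k=1$ case produces essentially all of the factorizations, so that $f_{2}(A_{2^{n}})$ is governed by $\mathrm{gnu}(2^{n})$, the number of groups of order $2^{n}$ up to isomorphism. Writing $N=2^{n}$, an exact factorization $A_{N}=GH$ falls into one of the three types of Theorem~\ref{FE}, and I will estimate each type's contribution to $f_{2}(A_{2^{n}})$, the bulk coming from type~(1) with $k=1$.

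\emph{The main term.} For a type~(1) factorization with $k=1$ the group $G$ is sharply $1$-transitive, hence regular, so $|G|=2^{n}$, while $H=H^{\Gamma}\cong A_{2^{n}-1}$ is the stabilizer of the single point of $\Delta$ and $H^{\Delta}=\{1\}$. The first point to establish is that a group $G$ of order $2^{n}$ admits a regular embedding into $A_{2^{n}}$ if and only if $G$ is non-cyclic: a generator of $\mathbb{Z}_{2^{n}}$ acts in its regular representation as a single $2^{n}$-cycle, an odd permutation, whereas any element of order $2^{j}$ with $j\le n-1$ (and every nonidentity element of a non-cyclic group of order $2^{n}$ has such order) acts as a product of $2^{n-j}\ge 2$ cycles of length $2^{j}$, hence evenly. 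Since the factor $H\cong A_{2^{n}-1}$ is the same for all these factorizations and is never isomorphic to $G$ (the orders differ), two of them are equivalent precisely when their $G$-factors are isomorphic; thus type~(1) with $k=1$ contributes exactly $\mathrm{gnu}(2^{n})-1$ inequivalent factorizations. I would also note that $A_{2^{n}}=AGL(1,2^{n})\,A_{2^{n}-2}$ is a further exact factorization for $n\ge 3$ --- this is type~(1) with $k=2$, using that the only near-field of $2$-power order is $\mathbb{F}_{2^{n}}$ and that $AGL(1,2^{n})\le A_{2^{n}}$ (its translations are products of $2^{n-1}$ transpositions and its multiplicative part is generated by a $(2^{n}-1)$-cycle, both even) --- so in fact $f_{2}(A_{2^{n}})\ge \mathrm{gnu}(2^{n})$.

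\emph{All other cases are subdominant.} Next I would dispatch the remaining possibilities. In type~(1) with $k=3$ the group $G$ would be $PGL(2,2^{n}-1)$ or a Mathieu-type group $\mathbf{M}_{q}$; the former cannot lie in $A_{q+1}$ for $q$ odd, and the latter forces $2^{n}-1$ to be an odd square, impossible for $n\ge 2$. Sharply $4$- and $5$-transitive groups have degrees among $\{4,5,6,11\}$ and $\{5,6,7,12\}$ respectively, never a power of $2$ for $n\ge 3$ (the degree-$4$ group $S_{4}$ not being contained in $A_{4}$ anyway). For type~(2) one runs through the eight rows of the table in Theorem~\ref{FE}: the degree must be a power of $2$, leaving only degree $8$ ($n=3$), degree $32$ ($n=5$), and the row $G=PSL(2,q)$ with $q+1=2^{n}$, the last occurring only when $2^{n}-1$ is a prime power, and each such row supplies at most one inequivalent factorization. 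Type~(3) occurs only for $n=3$, giving the single factorization $A_{8}=\mathbb{Z}_{15}\,A(3,2)$. In every one of these residual cases the first factor $G$ lies in a group of order at most $O(n\,2^{2n})$ --- for instance $|A\Gamma L(1,2^{n})|=n\,2^{n}(2^{n}-1)$, while the relevant copies of $PSL(2,2^{n}-1)$ and the type-(3) group are dealt with by hand --- so the number of admissible $G$, and hence of residual factorizations, is bounded by the number of subgroups of a group of order $\le n\,2^{2n}$, which by the standard subgroup-counting bound is
\begin{equation*}
(n\,2^{2n})^{(1/4+o(1))\log_{2}(n\,2^{2n})}=(n\,2^{2n})^{(1/4+o(1))(2n+\log_{2}n)},
\end{equation*}
and one extra unit absorbs the lone type-(3) factorization.

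\emph{Assembling the estimate.} Putting the contributions together yields
\begin{equation*}
\mathrm{gnu}(2^{n})-1\ \le\ f_{2}(A_{2^{n}})\ \le\ \bigl(\mathrm{gnu}(2^{n})-1\bigr)+(n\,2^{2n})^{(1/4+o(1))(2n+\log_{2}n)}+1 ,
\end{equation*}
and it then remains to insert Higman's lower bound $\mathrm{gnu}(2^{n})\ge 2^{\frac{2}{27}n^{2}(n-6)}$ (together with the extra $AGL$ factorization, which cancels the $-1$ on the left for $n\ge 3$, the few smaller $n$ being checked directly) and Sims's upper bound $\mathrm{gnu}(2^{n})\le 2^{\frac{2}{27}n^{3}+O(n^{5/2})}$ on the right, which gives exactly the asserted two-sided bound. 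I expect the genuine work to be the case analysis through the three types of Theorem~\ref{FE} --- most delicately, proving that the regular-embedding count is \emph{exactly} $\mathrm{gnu}(2^{n})-1$, the parity computations behind $PGL(2,q)\not\le A_{q+1}$ and $AGL(1,2^{n})\le A_{2^{n}}$, and sorting out which rows of the type-(2) table have power-of-$2$ degree --- whereas quoting the Higman--Sims enumeration of finite $p$-groups and the subgroup-counting bound is routine.
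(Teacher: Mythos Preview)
Your proposal is correct and follows essentially the same route as the paper's proof: both run the Wiegold--Williamson classification (Theorem~\ref{FE}) for $|\Omega|=2^{n}$, identify the $k=1$ regular case as contributing exactly $\mathrm{gnu}(2^{n})-1$ factorizations via the parity computation for the left-regular representation, bound the residual $k\ge 2$ and type-(2)/(3) contributions by the Borovik--Pyber--Shalev subgroup count applied to $A\Gamma L(1,2^{n})$, and then plug in the Higman lower bound and the Sims upper bound for $\mathrm{gnu}(2^{n})$. Your treatment is in fact a bit tidier than the paper's in two places --- you explicitly eliminate the sharply $3$-transitive factors (the paper merely bounds them by $+2$ and notes the Mersenne connection in a closing remark), and you use the extra $AGL(1,2^{n})$ factorization to absorb the $-1$ on the lower side --- but these are cosmetic refinements of the same argument.
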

\begin{proof}
First, we will prove that any noncyclic group of order $2^n$ is an exact factor of $A_{2^n}$. Indeed, let $H$ be a group of order $2^n$ and consider $H$ acting on itself by left multiplication. This action gives rise to a group morphism $\Phi: H \to \mathbb{S}_{2^n}$, defined, as usual, on each $h \in H$ as the map $\Phi_h: H \to H$ , given by $\Phi_h(x)=hx$, for all $x \in H$. Since $\Phi_h$ has no fixed points (except for $h=e$, the identity of the group), we can say that the representation of $\Phi_h$ is a product of disjoint cycles neither of them of length $1$ and therefore contains all natural numbers between $1$ and $2^n$.

If $h \in H$ then the order of $h$ is a power of $2$, that is $|h|=2^t$, for some $t\leq n$. Then the number $2^t$ is the smallest positive integer $\ell$ such that $\Phi_h^\ell(x)=x$, for any $x \in H$. This implies that in the representation of $\Phi_h$, as a product of disjoint cycles, we have $\Phi_h=\sigma_1 \sigma_2...\sigma_k$, where each $\sigma_i$ is a cycle of the same length $2^t$. This implies that $2^tk=2^n $, that is, $k=2^{n-t}$ and therefore $k$ is even except for $n=t$. Hence, every $\Phi_h$ is an even permutation, except for the cyclic group $H= \mathbb{Z}_{2^n}$. From all above, if $H$ is a noncyclic group of order $2^n$ then it is isomorphic to a subgroup of the alternating group $A_{2^n}$.

With the notation of remark \ref{note}, if $A_{2^n}=G \; H $ is an exact factorization then we need to analyse several cases, depending on which group is $H^\Gamma$.
If $H^\Gamma = A^\Gamma$ then, in this case, the number $k$ assume only a value between 1 and 5 and, furthermore, $H = A^\Gamma$. Depending on the values of $k$, we have the  following situations:

{\bf Case $k = 1$:} $G$ is a transitive group on $\Omega$, of order $2^n$, which acts without fixed points. According to what was shown initially, given any group of order $2^n$ we would have an exact factorization $A_{2^n}=G \; A_{2^n-1}$.

{\bf Case $k =2$:} We have that $G$ is a sharply $2$-transitive group and therefore, as a consequence of Zassenhaus' theorem (see \cite[Thm. 20.3]{Pass}), in this case, $\Omega$ can be identified with $\text{GF}(q)$, where $q= p^n$ for some prime $p$ and further $G$ is a subgroup of $A \Gamma L(1, q)$ . It is clear that in this case, $q=2^n$.

{\bf Case $k =3$:} $G$ is a sharply $3$-transitive group. Then, by the Zassenhaus' theorem \cite[20.5]{Pass}, we have that $2^n-1=p^m$, for some prime $p$ and a positive integer $m$. Furthermore, $\Omega$ can be identified with the set $\hat{K}$ \eqref{fractional-semi}, where $K=GF(q)$ and, depending on the nature of $q$ and $m$, the group $G $ has the following options:
\begin{itemize}
    \item $G$ is isomorphic to $\text{LF}(p^m)$ or $M(p^m)$, if $p$ is odd and $n$ is even;
    \item $G$ is isomorphic to $LF(p^m)$, if $p=2$ or $m$ is odd.
\end{itemize}

{\bf Case $k=4$ or $k=5$,} then the cardinal of $\Omega$ must be $11$ or $12$, respectively. Which is impossible since $|\Omega|=2^n$.

Now, if $H^\Gamma = S^\Gamma$ then, according to \eqref{FE}, there would only be two factorizations of $A_{2^n}$ for $n=3$ and one factorization for $n=5$. The same theorem indicates that there is an additional factorization for the case $n=3$, where $H^\Gamma \neq A^\Gamma, S^\Gamma$.

If we denote by $\text{gnu}(\ell)$ the number of non-isomorphic groups of order $\ell$ and by $\sigma_{k}(\ell, r)$ the number of sharply $k$-transitive groups of order $\ell$ and degree $r$, we can state that
\begin{align}\label{bound1}
    f_2(A_{2^n}) &= (\text{gnu}(2^n)-1) + \sum_{k=2}^5 \sigma_{k}\left(2^n(2^n -1)\cdots (2^n-k+1), 2^n \right) .\\
\end{align}

C. Sims, M. Newman and C. Seeley \cite[Theo. 5.8]{BNV} proved that $\text{gnu}(p^n) = p^{\frac{2}{27}m^3 + O(m^{5/2})}$. Also, Borovik, Pyber, and Shalev \cite[Cor 1.6]{BPS} proved that given any group $F$, the number of subgroups of $F$ is bounded by $|F| (\frac{1}{4} + o(1))\log_2|F|$. Hence, carrying this bounds to \eqref{bound1}, we get, for $n\geq 4$,
\begin{equation*}
f_2(A_{2^n}) \leq (2^{\frac{2}{27}n^3 + O(n^{5/2})} -1)+ |A \Gamma L(1, 2^n)|^{(\frac{1}{4} + o(1))\log_2|A \Gamma L(1.2^n)|} + 2 .
\end{equation*}
But $|A \Gamma L(1,2^n)| = n 2^n(2^n-1)$, therefore

\begin{equation*}
f_2(A_{2^n}) \leq 2^{\frac{2}{27}n^3 + O(n^{5/2})} + (n 2^n(2^n-1) )^{(\frac{1}{4} + o(1))\log_2(n 2^n(2^n-1))} + 1 ,
\end{equation*}
and the desired upper bound follows.

Now, G. Higman \cite[Theo. 4.5]{BNV} proved that $\text{gnu}(p^n) \geq p^{\frac{2}{27}n^2(n - 6)}$. This and what has been demonstrated in \textit{case I}, implies
$f_2(A_{2^n}) \geq 2^{\frac{2}{27}n^2(n - 6)}$, and the proof ends.
\end{proof}

\begin{obs}
To close this section we would like to mention a fact that went unnoticed in case III of the proof of of lemma \ref{teo-A2n}. In this case, sharply $3$-transitive groups contribute with, at most, two additional groups to the counting. But, what are the cases where this two additional cases does appear? For an exact $3$-transitive factor $G$ to exist, it is necessary that $2^n-1=p^m$, for some odd prime $p$, that is $x^n-y^m =1$ has non negative non trivial integer solutions; which, according to \emph{Catalan Conjecture} (now Mihailescu's theorem \cite{Mi}), it holds only if $m=1$. That is, $A_{2^n}$ only has the sharply $3$-transitive factor $G$ if $p=2^n - 1$ is a Mersenne's prime. 
\end{obs}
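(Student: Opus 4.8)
The plan is to take as input the necessary condition extracted in Case III of the proof of Lemma \ref{teo-A2n}: if $A_{2^n}$ carries a sharply $3$-transitive exact factor $G$, then by Zassenhaus' classification \cite[Thm. 20.5]{Pass} the degree $2^n$ must equal $q+1$ for a prime power $q=p^m$, that is $2^n-1=p^m$ with $p$ prime and $m\geq 1$. First I would dispose of the case $p=2$: the left-hand side $2^n-1$ is odd, while $2^m$ is even for every $m\geq 1$, so $p$ is forced to be an odd prime, and in particular $2^n-1\geq 3$, whence $n\geq 2$. The entire content of the remark is then to upgrade this to $m=1$, which is precisely the statement that $2^n-1$ is prime.

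To do this I would rewrite the relation as the Catalan-type equation
\begin{equation*}
2^n-p^m=1,
\end{equation*}
and invoke Mihailescu's theorem \cite{Mi} (the former Catalan conjecture), which asserts that the only solution of $X^A-Y^B=1$ in integers all at least $2$ is $3^2-2^3=1$. The key step is a hypothesis check followed by a base-mismatch argument: suppose for contradiction that $m\geq 2$. Since then $2^n=p^m+1\geq 3^2+1=10$, we obtain $n\geq 4\geq 2$, so the quadruple $(X,A,Y,B)=(2,n,p,m)$ satisfies $X,A,Y,B\geq 2$ and Mihailescu's theorem applies, forcing $(X,A,Y,B)=(3,2,2,3)$. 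But this requires $X=3$, contradicting $X=2$. Hence $m=1$.

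With $m=1$ in hand, the relation reads $2^n-1=p$ with $p$ prime, which is by definition a Mersenne prime; this proves the remark. I do not expect a genuine obstacle here: the only points requiring care are verifying that all four quantities in the displayed equation are at least $2$ before Mihailescu's theorem may be applied (the degenerate possibilities $p=2$ and $n=1$ having been excluded beforehand), and observing that the unique exceptional Catalan solution carries the base $3$ on its larger term, so it can never coincide with an expression of the form $2^n-p^m$. As a classical aside one could record that $2^n-1$ being prime forces $n$ itself to be prime, but this refinement is not needed for the statement.
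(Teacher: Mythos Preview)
Your argument is correct and follows the same route as the paper: reduce to $2^n-1=p^m$ with $p$ an odd prime via Zassenhaus' classification, and then apply Mihailescu's theorem to force $m=1$. Your version is in fact more careful than the remark as stated, since you explicitly verify the hypotheses $X,A,Y,B\geq 2$ before invoking Mihailescu and check that the unique exceptional solution $3^2-2^3=1$ is incompatible with the base $2$ on the larger term.
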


\end{section}

\begin{section}{Acknowledgments}
This work was supported by Pontificia Universidad Javeriana at Bogot\'a, Colombia, under the research project with ID 9925.
\end{section}

\bibliographystyle{plain}

\end{document}